\DeclareMathAlphabet{\mathpzc}{OT1}{pzc}{m}{it}
\newtheorem{theorem}{Theorem}[section]
\newtheorem{corollary}[theorem]{Corollary}
\newtheorem{definition}{Definition}[section]
\newenvironment{proof}[1][Proof]{\noindent \emph{#1.} }
{\hfill \ \rule{0.5em}{0.5em}}
\newtheorem{lemma}{Lemma}[section]
\newtheorem{proposition}[theorem]{Proposition}
\newtheorem{problem}{Problem}[section]
\newtheorem{example}[theorem]{Example}
\newtheorem{remark}[theorem]{Remark}
\newcounter{mycount}
\numberwithin{equation}{section}
\newcommand{\beq}{\begin{equation}}
\newcommand{\eeq}{\end{equation}}
\newcommand{\beqs}{\begin{equation*}}
\newcommand{\eeqs}{\end{equation*}}
\newcommand{\bit}{\begin{itemize}}
\newcommand{\eit}{\end{itemize}}
\newcommand{\ben}{\begin{enumerate}}
\newcommand{\een}{\end{enumerate}}
\newcommand{\bal}{\begin{align}}
\newcommand{\eal}{\end{align}}
\newcommand{\bals}{\begin{align*}}
\newcommand{\eals}{\end{align*}}
\newcommand{\bse}{\begin{subequations}}
\newcommand{\ese}{\end{subequations}}
\newcommand{\bpr}{\begin{proposition}}
\newcommand{\epr}{\end{proposition}}
\newcommand{\bre}{\begin{remark}}
\newcommand{\ere}{\end{remark}}
\newcommand{\bpf}{\begin{proof}}
\newcommand{\epf}{\end{proof}}
\newcommand{\ble}{\begin{lemma}}
\newcommand{\ele}{\end{lemma}}
\newcommand{\bco}{\begin{corollary}}
\newcommand{\eco}{\end{corollary}}
\newcommand{\bex}{\begin{example}}
\newcommand{\eex}{\end{example}}
\newcommand{\bth}{\begin{theorem}}
\newcommand{\enth}{\end{theorem}}
\newcommand{\noi}{\noindent}
\newcommand{\cC}{{\cal C}}
\newcommand{\cO}{{\cal O}}
\newcommand{\la}{\lambda}
\newcommand{\La}{\Lambda}
\newcommand{\eps}{\varepsilon}
\newcommand{\re}{{\rm e}}
\newcommand{\ri}{{\rm i}}
\newcommand{\rd}{{\rm d}}
\newcommand{\Com}{\mathbb{C}}
\newcommand{\half}{\frac{1}{2}}
\newcommand{\diff}[2]{\frac{\rd #1}{\rd #2}}
\newcommand{\pdiff}[2]{\frac{\partial #1}{\partial #2}}
\newcommand{\tendi}{\rightarrow \infty}
\newcommand{\tendo}{\rightarrow 0}
\def\XXint#1#2#3{{\setbox0=\hbox{$#1{#2#3}{\int}$}
     \vcenter{\hbox{$#2#3$}}\kern-.5\wd0}}
\newcommand{\vertiii}[1]{{\left\vert\kern-0.25ex\left\vert\kern-0.25ex\left\vert #1
    \right\vert\kern-0.25ex\right\vert\kern-0.25ex\right\vert}}
\newcommand{\vertf}[1]{{\left\vert\kern-0.25ex\left\vert\kern-0.25ex\left\vert\kern-0.25ex\left\vert #1
    \right\vert\kern-0.25ex\right\vert\kern-0.25ex\right\vert\kern-0.25ex\right\vert}}
\newcommand{\tfa}{\text{ for all }}
\newcommand{\tas}{\text{ as }}
\newcommand{\tand}{\text{ and }}
\newcommand{\tJ}{\widetilde{J}}
\newcommand{\intFokas}[1]{\int_{#1}^{\infty \re^{\ri \pi/4}}}
\newcommand{\EPhi}{\Phi^{[1]}}
\newcommand{\EPhia}{\Phi}
\definecolor{escol}{rgb}{0,0,0.8}
\definecolor{estcol}{rgb}{0.8,0,0}
\definecolor{afcol}{rgb}{1,0,0}
\begin{document}

\title{Uniform asymptotics as a stationary point approaches an endpoint}

\author{A.~Fernandez
\thanks{Department of Applied Mathematics and Theoretical Physics, University of Cambridge, Cambridge, CB3 0WA (\texttt{A.Fernandez@damtp.cam.ac.uk})}
,
E.~A.~Spence
\thanks{Department of Mathematical Sciences, University of Bath, Bath, BA2 7AY (\texttt{E.A.Spence@bath.ac.uk})}
,
A.~S.~Fokas
\thanks{Department of Applied Mathematics and Theoretical Physics, University of Cambridge, Cambridge, CB3 0WA, and Viterbi School of Engineering University of Southern California, Los Angeles, CA 90089, USA  (\texttt{T.Fokas@damtp.cam.ac.uk})}
}

\date{\today}

\maketitle

\begin{abstract}
We obtain the rigorous uniform asymptotics of a particular integral where a stationary point is close to an endpoint.
There exists a general method introduced by Bleistein for obtaining uniform asymptotics in this situation.
However, this method does not provide rigorous estimates for the error. Indeed, the method of Bleistein
 starts with a change of variables, which implies that
 the parameter governing how close the stationary point is to the endpoint appears in several  parts of the integrand, and this means that one cannot obtain general error bounds. 
 By adapting the above
  method to our particular integral, we obtain  rigorous uniform leading-order  asymptotics.
We also give a rigorous derivation of the asymptotics to \emph{all} orders of the same integral; the novelty of this second approach is that it does \emph{not} involve a global change of variables. 
\end{abstract}

\section{Introduction}\label{sec:intro}
Stokes and Kelvin established  in the nineteenth century that the main contributions to the large-$t$ asymptotics of the integral
\beqs
\int_\alpha^\beta g(x) \exp\big( \ri th(x)\big)  \rd x,
\eeqs
where the functions $g(x)$ and $h(x)$ are sufficiently smooth,
come from the neighbourhood of the endpoints $\alpha$ and $\beta$, and from the neighbourhood of \emph{stationary points} of $h(x)$, i.e.~points where $h'(x)=0$.

The case when the stationary point is close to an endpoint was considered by Bleistein in \cite{Bl:66}, where he introduced a general algorithm  for
obtaining uniform asymptotics using a global change of variables and integration by parts; 
this work  followed on from analogous treatments of two nearby stationary points by Chester, Friedman, and Ursell \cite{ChFrUr:57}, \cite{Fr:59}, \cite{Ur:65}.
A good description of this general methodology can be found in \cite[Chapter VII]{Wo:89}.
We note, however, that these general algorithms do not give rigorous uniform error estimates, and such estimates must be obtained on a case-by-case basis; we illustrate this statement  below in
our discussion of the particular integral $J_B$, defined by Definition 1.1.

The aim of this paper is to obtain the leading-order asymptotics of the integral $J_B$ with a rigorous uniform error estimate. This estimate is needed for the proof of a variant of the Lindel\"of hypothesis \cite{Fo:17}. %, Fo:17a}.

\begin{definition}[The integral $J_B$]\label{def:tJB}
Let $0<\delta<1$ and $1/2\leq \sigma <1$ be fixed constants, and let $\la$ satisfy
\beq\label{eq:3old}
\frac{t^{\delta-1}}{1-t^{\delta-1}}
\leq  \la \leq t^{1-\delta}-1.
\eeq
Let 
\beq\label{eq:1old}
J_B(t;\la, \delta,\sigma) :=\int_{1-t^{\delta-1}}^{\infty \re^{\ri \phi}} (1-z)^{-1/2}z^{\sigma-1/2} \exp{(\ri t F(z;\la))}\,\rd z,
\eeq
where
\beq\label{eq:2old}
F(z;\lambda) := (1-z) \ln (1-z)+ z \ln z + z \ln \lambda,
\eeq
with the branch cuts of $F$ (as a function of $z$) from $-\infty$ to $0$ and from $1$ to $\infty$.

The angle $\phi$ in the endpoint of integration in \eqref{eq:3old} satisfies
\beq\label{eq:phi1}
0<\phi<\pi/2\quad\text{ when } \ln \la\geq 0,
\eeq
and
\beq\label{eq:phi2}
0<\phi<\arctan \bigg(\frac{\pi}{\big|\ln \la\big|}\bigg)\quad \text{ when } \ln\la<0.
\eeq
\end{definition}

It is straightforward to check that the requirements \eqref{eq:phi1} and \eqref{eq:phi2} are equivalent to demanding that $\Im F(z;\la)>0$ for large $|z|$, i.e.~that the integrand in \eqref{eq:1old} has exponential decay for large $|z|$.

\begin{problem}\label{prob:1}
Find the leading-order
asymptotics of $J_B$ as $t\tendi$ for $\la$ in the range \eqref{eq:3old}, with the error term independent of $\la$.
\end{problem}

Since we are not interested in the dependence of the error term on the parameters $\sigma$ and $\delta$, we will suppress the dependence of $J_B$ (and all other functions) on these two variables; i.e.~we write $J_B=J_B(t;\la)$ only.

\paragraph{Why is Problem \ref{prob:1} difficult?}
Since
\beq\label{eq:1}
\pdiff{F}{z}(z,\la)= \ln \left(\frac{z\la}{1-z}\right),
\eeq
there is a stationary point at
\beq\label{eq:stat_point}
z= \frac{1}{1+\la}.
\eeq
When 
\beqs
\frac{t^{\delta-1}}{1-t^{\delta-1}}< \la \leq t^{1-\delta}-1,
\eeqs
the stationary point is in the interval $(0,1-t^{\delta-1})$, and thus is away from the contour of integration. However, 
when $\la$ equals $\la_c$, defined by 
\beq\label{eq:la_c}
\la_c:=\frac{t^{\delta-1}}{1-t^{\delta-1}}=\frac{1}{t^{1-\delta}-1},
\eeq
the stationary point is at $z=1-t^{\delta-1}$, i.e.~at the endpoint of integration.

The method of Bleistein deals with the situation of a stationary point close to an endpoint by introducing a global change of variables. Before following this method, it is convenient to introduce new variables $\zeta=\zeta(z)$ and $\La=\La(\la)$ so that $\zeta=0$ corresponds to 
$z=1-t^{\delta-1}$ and $\La=0$ corresponds to $\la=\la_c$; i.e.~we let
\beq\label{Lambda-defn}
\la= \la_c(1+\La) \quad\tand \quad z=\frac{(1+ \la_c \zeta)}{1+ \la_c}.
\eeq
In these new variables, the stationary point is at 
\beq\label{eq:stat_point2}
\zeta= \frac{-\La}{1+ \la_c(1+\La)},
\eeq
therefore on the positive real axis, and at the endpoint of integration $\zeta=0$ if $\La=0$.

We then have that 
\beq\label{eq:JBtJB}
J_B(t;\la) = \left(\frac{\la_c}{1+\la_c}\right)^{1/2}\left(\frac{1}{1+\la_c}\right)^{\sigma-1/2} \exp\left( \frac{\ri t f_0(\La,\la_c)}{(1+\la_c)}\right) \, \tJ_B(t;\la),
\eeq
where $f_0$ is defined by 
\beq\label{eq:f_0}
f_0(\La,\la_c):=\ln \left(\frac{\la_c(1+\La)}{1+\la_c}\right)- \la_c \ln\left(\frac{1+\la_c}{\la_c}\right),
\eeq
and $\tJ_B$ is defined by 
\beq\label{eq:0}
\tJ_B(t;\La):= \int_0^{\infty \re^{\ri \phi}} g(\zeta;t) \,\exp\left(\ri t h(\zeta;t,\La)\right) \rd \zeta,
\eeq
where $\phi$ satisfies \eqref{eq:phi1} and \eqref{eq:phi2} as before,
\beq\label{eq:g}
g(\zeta;t):= (1-\zeta)^{-1/2} (1+ \la_c \zeta)^{\sigma-1/2}, \quad\tand \quad 
h(\zeta;t,\La):= \frac{f_1(\zeta;t,\La)}{1+\la_c},
\eeq
with
\beq\label{eq:f1}
f_1(\zeta;t,\La) := \la_c \zeta\Big[ \ln(1+ \Lambda) + \ln(1+ \la_c \zeta) - \ln(1-\zeta) \Big] + \ln (1+ \la_c \zeta) + \la_c \ln(1-\zeta).
\eeq
The branch cut for $\ln(1+ \la_c\zeta)$ is taken from $-1/\la_c$ to $\infty$ on the negative real axis and the branch cut for $\ln(1-\zeta)$ from $1$ to $\infty$ is taken on the positive real axis. Note that the range for $\la$ in \eqref{eq:3old} means that the parameter $\La$ satisfies 
\beq\label{eq:range}
0\leq \Lambda < \frac{t^{1-\delta}-1}{\la_c}-1.
\eeq

The method of Bleistein introduces a global change of variables $u=u(\zeta)$
so that
\beq\label{eq:cov}
h(\zeta;t,\La)- h(0;t,\La)= \half u^2 + au,
\eeq
where $a$ is chosen so that when $\zeta$ is given by \eqref{eq:stat_point2}, $u=-a$. Performing this change of variables in \eqref{eq:0}, and observing that $h(0;t,\La)=0$, we obtain
\beqs
\tJ_B(t;\La)= \intFokas{0} g\left(\zeta(u);t\right) \diff{\zeta}{u}(u) \,\exp \left( \ri t \left( \half u^2 + au\right)\right) \rd u.
\eeqs
The Bleistein method then proceeds by integrating by parts, and gives formal asymptotics of $\tJ_B$. Arguments due to Erdelyi for the case $a=0$ \cite[\S2.9]{Er:56} can be adapted to give a rigorous uniform bound for the error in the general case, i.e.,~for general $g$ and $h$, \emph{under the assumption that the function
\beqs
g\left(\zeta(u);t\right) \diff{\zeta}{u}(u)
\eeqs
is independent of $t$ and $\La$.} Even when $g(\zeta; t)= g(\zeta)$, which is not the case for $\tJ_B$ defined by \eqref{eq:0}, this assumption will not hold in general since the change of variables $\zeta(u)$ (and hence also $(\rd \zeta/\rd u)(u)$) depend on $\La$, via the dependence of $a$ on $\La$.

In this paper, we provide the necessary modifications to these arguments to obtain the rigorous uniform asymptotics of $\tJ_B$ as $t\tendi$.
The solution of Problem \ref{prob:1} is then as follows.

\begin{theorem}[Solution of Problem \ref{prob:1}, i.e.~uniform leading-order asymptotics of $J_B$]\label{thm:1}
Let
\beq\label{eq:omega}
\omega(t,\La):= \sqrt{\frac{\la_c t}{2}} \frac{\ln(1+\Lambda)}{1+\la_c}.
\eeq
Observe that $\omega\geq 0$ since $\La\geq 0$.
The leading-order asymptotics of $J_B$ are given by \eqref{eq:JBtJB}, with the leading-order asymptotics of $\tJ_B$ given by
\beq\label{eq:result1}
\tJ_B(t;\la)= 
\re^{-\ri \omega^2}
\sqrt{\frac{2}{\la_c t}} \left( \int_\omega^{\infty \re^{\ri \pi/4}}\re^{\ri \xi^2}\rd \xi\right) \Big(1+ o(1)\Big),
\eeq
where the $o(1)$ is independent of $\La$.

If $\Lambda$ is such that $\omega=\cO(1)$ as $t\tendi$ (e.g.~$\La=0$), then the integral on the right-hand side of \eqref{eq:result1} is an $\cO(1)$ quantity independent of $\La$.
Furthermore, if $\Lambda$ is such that $\omega\rightarrow\infty$ as $t\tendi$, then
\beq\label{eq:result2}
\tJ_B(t;\la)= 
\sqrt{\frac{2}{\la_c t}}
\left( \frac{-1}{2\ri \omega}+ \cO\left( \frac{1}{\omega^3}\right)\right) \Big(1+o(1)\Big),
\eeq
as $t\tendi$, where both the $o(1)$ and the omitted constant in the $\cO( 1/\omega^3)$
are independent of $\Lambda$.
\end{theorem}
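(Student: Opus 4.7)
My plan is to implement the Bleistein change of variables outlined at \eqref{eq:cov} and adapt the error analysis of Erdelyi \cite[\S2.9]{Er:56} so as to track carefully the $\La$- and $t$-dependence in the amplitude. In the variable $u$ defined by $h(\zeta;t,\La) = \tfrac{1}{2}u^2 + au$ with $a = \sqrt{-2 h(\zeta_{\mathrm{stat}};t,\La)} \ge 0$, the stationarity identity $(1+\La)(1+\la_c\zeta_{\mathrm{stat}}) = 1-\zeta_{\mathrm{stat}}$ will give the explicit closed form
$$h(\zeta_{\mathrm{stat}};t,\La) = \ln\!\bigg(\frac{1+\la_c}{1+\la_c(1+\La)}\bigg) + \frac{\la_c\ln(1+\La)}{1+\la_c}.$$
Positivity of $f_1''(\zeta) = \la_c[\la_c/(1+\la_c\zeta) + 1/(1-\zeta)]$ on $(-1/\la_c,1)$ ensures that $\zeta\mapsto u$ is an analytic diffeomorphism in a neighbourhood of the contour, so that the integral becomes
$$\tJ_B(t;\La) = \int_0^{\infty\re^{\ri\pi/4}} G(u;t,\La)\,\exp\!\big(\ri t(\tfrac{1}{2}u^2 + au)\big)\,\rd u,\qquad G(u;t,\La) := g(\zeta(u);t)\diff{\zeta}{u}(u).$$

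\textbf{Decomposition and integration by parts.} Next, I will split $G(u) = G(-a) + (u+a)H(u)$, with $H(u) := (G(u)-G(-a))/(u+a)$ analytic by Taylor's theorem. The $G(-a)$-piece, via $\xi = \sqrt{t/2}(u+a)$, yields
$$G(-a)\,\re^{-\ri t a^2/2}\sqrt{\tfrac{2}{t}}\int_{a\sqrt{t/2}}^{\infty\re^{\ri\pi/4}}\re^{\ri\xi^2}\rd\xi,$$
where $G(-a) = \la_c^{-1/2}\big((1+\la_c)/(1+\la_c(1+\La))\big)^\sigma$, computed from $h''(\zeta_{\mathrm{stat}}) = \la_c(1+\la_c(1+\La))^2/((1+\la_c)^2(1+\La))$. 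The $(u+a)H(u)$-piece is then integrated by parts via $(u+a)\re^{\ri t(u^2/2+au)} = (\ri t)^{-1}(\rd/\rd u)\re^{\ri t(u^2/2+au)}$, producing a boundary contribution $-H(0)/(\ri t)$ at $u=0$ (the contribution at $\infty\re^{\ri\pi/4}$ vanishes by exponential decay) together with a bulk remainder $-(\ri t)^{-1}\int_0^{\infty\re^{\ri\pi/4}} H'(u)\re^{\ri t(u^2/2+au)}\rd u$.

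\textbf{Uniform bounds and matching.} The hard part, flagged in the introduction, is that $\zeta(u)$, $G$, $H$, and $H'$ all depend non-trivially on $\La$ and $t$. The plan is to derive uniform estimates of the form $|H'(u)| \le C(1+|u|)^{-\alpha}$ for some $\alpha>0$, holding for all $\La$ in the range \eqref{eq:range}, by implicit differentiation of $h'(\zeta)\,\rd\zeta/\rd u = u+a$ combined with the lower bound on $f_1''$ away from $\zeta=1$; after deforming the bulk integral onto the steepest-descent path $\arg u = \pi/4$, this will give the uniform bound $(\ri t)^{-1}\int H'(u)\re^{\ri t(\cdot)}\rd u = o(t^{-1/2})$. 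It then remains to verify that
$$G(-a)\,\re^{-\ri t a^2/2}\sqrt{\tfrac{2}{t}}\int_{a\sqrt{t/2}}^{\infty\re^{\ri\pi/4}}\re^{\ri\xi^2}\rd\xi \,-\, \frac{H(0)}{\ri t} = \sqrt{\tfrac{2}{\la_c t}}\,\re^{-\ri\omega^2}\int_\omega^{\infty\re^{\ri\pi/4}}\re^{\ri\xi^2}\rd\xi\cdot(1+o(1)),$$
uniformly in $\La$. In the regime $\omega = \cO(1)$, which forces $\La\tendo$ as $t\tendi$, Taylor expansion of $h(\zeta_{\mathrm{stat}};t,\La)$ gives $a\sqrt{t/2} - \omega = o(1)$, $ta^2/2 - \omega^2 = o(1)$, and $G(-a)\sqrt{\la_c}\to 1$, while $H(0)/(\ri t) = \cO(1/t)$ is subleading to the $\cO(t^{-1/2})$ Fresnel term. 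In the regime $\omega\tendi$, the large-argument expansion $\int_x^{\infty\re^{\ri\pi/4}}\re^{\ri\xi^2}\rd\xi = -\re^{\ri x^2}/(2\ri x) + \cO(x^{-3})$ applied to the Fresnel piece shows that, after the algebra $-G(-a)/(\ri at) - H(0)/(\ri t) = -G(0)/(\ri at)$, the left-hand side above reduces to $-\sqrt{2/(\la_c t)}/(2\ri\omega)$, matching the leading term of the right-hand side.

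\textbf{Large-$\omega$ corollary.} Finally, \eqref{eq:result2} follows from \eqref{eq:result1} by applying the same large-argument expansion $\int_\omega^{\infty\re^{\ri\pi/4}}\re^{\ri\xi^2}\rd\xi = -\re^{\ri\omega^2}/(2\ri\omega) + \cO(\omega^{-3})$ as $\omega\tendi$.
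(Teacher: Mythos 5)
Your proposal takes a genuinely different route from the paper. You use the fully Bleistein-matched change of variables $h(\zeta)-h(0) = \tfrac12 u^2+au$ with $a=\sqrt{-2h(\zeta_{\mathrm{stat}})}$, so that the critical value of the quadratic matches that of $h$, and then split off the value of the amplitude at the stationary point: $G(u)=G(-a)+(u+a)H(u)$. The paper instead uses an \emph{unmatched} normalisation (Lemma~\ref{lem:1}, equation~\eqref{eq:2}) chosen so that $\zeta \approx u$ near the endpoint, and then subtracts the value at the endpoint $u=0$ rather than at the stationary point; the crucial consequence is that its leading term is simply $F(0)\Phi(0;t,\La)=\Phi(0;t,\La)$, already in the form demanded by \eqref{eq:result1}, with no further matching needed. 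In your set-up the Fresnel integral runs from $a\sqrt{t/2}$ rather than from $\omega$, the amplitude is $G(-a)$ rather than $1$, and there is an extra boundary term $-H(0)/(\ri t)$; recovering \eqref{eq:result1} therefore requires the non-trivial algebraic cancellations you sketch. That algebra is correct as far as it goes (in particular $-G(-a)/(\ri at)-H(0)/(\ri t)=-G(0)/(\ri at)$, and with $G(0)=a(1+\la_c)/(\la_c L)$ one checks this agrees with $-\sqrt{2/(\la_c t)}/(2\ri\omega)$ even though $a\sqrt{t/2}\neq\omega$ when $\La$ is not small), so your decomposition is viable.

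The genuine gap is in the error analysis. You claim the bulk remainder is $o(t^{-1/2})$ uniformly in $\La$. But the leading term in Theorem~\ref{thm:1} can be much smaller than $t^{-1/2}$: by \eqref{eq:s2}, $|\Phi(0;t,\La)|$ is only bounded below by a multiple of $\tfrac{1}{\la_c t\,\ln t}\sim t^{-\delta}/\ln t$, so for $\delta\ge 1/2$ the bound $o(t^{-1/2})$ does \emph{not} imply $o\bigl(\Phi(0;t,\La)\bigr)$, and the multiplicative $(1+o(1))$ in \eqref{eq:result1} is not established in the regime $\omega\to\infty$. What is actually needed is a bound on $(\ri t)^{-1}\int H'(u)\,\re^{\ri t(u^2/2+au)}\rd u$ that \emph{decays with $a\sqrt t$}: along $\arg u=\pi/4$ the exponential damping $\re^{-t\,a|u|/\sqrt2}$ gives an extra factor $\sim 1/(ta)$ when $ta$ is large, which is exactly the gain required. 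The paper sidesteps this entirely by proving the error is $o(\Phi(0;t,\La))$ directly (Lemma~\ref{lem:error}), comparing $\Phi(u)$ to $\Phi(0)$ via Lemma~\ref{lem:3.4} rather than to a fixed power of $t$. Related gaps you should address before the argument is complete: the uniformity in $\La$ and $t$ of the constant in $|H'(u)|\le C(1+|u|)^{-\alpha}$ is asserted but not proved, and is subtle because $G$, $H$ inherit a $\la_c^{-1/2}$-scale and the decay exponent $\alpha>0$ at infinity is not obviously available for the actual integrand (the paper instead uses polynomial growth of the amplitude combined with Gaussian decay -- see \eqref{eq:chip}--\eqref{eq:s3}); and the matching of your Fresnel expression to $\Phi(0;t,\La)(1+o(1))$ is only verified at the two regime endpoints, not uniformly across the transition.
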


The integral on the right-hand side of \eqref{eq:result1} is a Fresnel-type integral, and can be expressed in terms of the special function $\mathscr{F}(z)$ defined in \cite[Equation 7.2.6]{Di:17}.

\bre
\emph{\textbf{(Transition between ``stationary-point" and ``integration-by-parts" contributions)}}
In the asymptotics \eqref{eq:result1}, 
$\la_c t$ plays the role of the large parameter (recall from \eqref{eq:la_c} that $\la_c \sim t^{\delta-1}$ as $t\tendi$, so $\la_c t \sim t^\delta \tendi$ as $t\tendi$).

When $\La$ is such that $\omega=\cO(1)$ as $t\tendi$, the right-hand side of \eqref{eq:result1} is $\cO((\la_c t)^{-1/2})$,
 i.e., the asymptotics expected from a stationary point.
 When $\La$ is such that $\omega\tendi$ as $t\tendi$, \eqref{eq:result2} implies that the right-hand side of \eqref{eq:result1} is $\cO((\la_c t)^{-1})$, i.e., the asymptotics expected from integration by parts away from a stationary point.
\ere

\paragraph{Obtaining the solution to Problem \ref{prob:1} \emph{without} a global change of variables.}

When one obtains the formal asymptotics of standard stationary-phase-type integrals (i.e.~those without the stationary point approaching an endpoint), one splits the integral, uses local expansions near the stationary point, and then uses integration by parts away from the stationary point (see, e.g., \cite[\S6.5]{BeOr:78}).
Similarly, for the formal asymptotics of Laplace-type integrals, one uses local expansions near the points at which the exponent is maximised, and integration by parts away from these points (see, e.g., \cite[\S6.4]{BeOr:78}).

For the rigorous justification of these asymptotics, however, the standard approach is to first make a global change of variables (in the same spirit as \eqref{eq:cov} above); see, e.g., 
\cite[\S2.4, \S2.9]{Er:56}, \cite[Chapter 2 \S1 \S3]{Wo:89}, and \cite[\S3.3, \S5.3]{Mi:06}.

It is rare to see examples in the literature where rigorous asymptotics are obtained 
\emph{without} first making a global change of variables, but via directly splitting the integral and using local expansions and integration by parts; one notable exception is \cite[\S6.4, Examples 7 and 8]{BeOr:78}.

It is therefore a challenging question whether the rigorous \emph{uniform} leading-order asymptotics of $J_B$ can be obtained \emph{without} first making a global change of variables. In this paper we show that this is indeed possible; in fact, we go even further by obtaining the asymptotics to \emph{all} orders of $J_B$, in the most important case when $\sigma=1/2$.

\begin{theorem}[Asymptotics of $J_B$ to all orders when $\sigma=1/2$]
\label{JB1+2-theorem}
In the case $\sigma=1/2$, the asymptotics of $J_B$ to all orders are given by
\begin{align}\nonumber
J_{B}(t;\la)=&\sum_{j=1}^{m-3}T_j(t;\la)+\cO\left((4m-5)!! t^{-\frac{1}{2}-\frac{(4m-5)\delta}{2}}(\ln t)^{(4m-3)/2}a^{-4m+4}\right) \\
&+\exp\Big(\ri tF(1-t^{\delta-1};\lambda)-
\ri\omega^2\Big)\,t^{-1/2}\sqrt{\frac{2}{1+\lambda_c}}\left(\int_{\omega}^{\omega + a \sqrt{\la_c t/2}}\re^{\ri\xi^2}\,\mathrm{d}\xi\right)
+\cO\left(t^{-\frac{1}{2}+\frac{3\delta}{2}}a^4\right),\label{JB1+2-asymptotics}
\end{align}
for any natural number
 $m\geq 4$, where $a\gg t^{-\delta/2}$ is such that 
\beq\label{a-assumption-sandwich}
t^{\epsilon-\frac{\delta}{2}+\frac{\delta}{4m+2}}\ll a\ll t^{-\frac{\delta}{2}+\frac{\delta}{4m-2}}
\eeq
for some $\epsilon>0$, and the summands $T_j$ are defined exactly by \eqref{eq:Tj} below and can be estimated by (\ref{Tj-estimate-a}).
\end{theorem}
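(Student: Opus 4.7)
My plan is to prove \eqref{JB1+2-asymptotics} by working with the rescaled integral $\tJ_B$ of \eqref{eq:0} and splitting it at an intermediate point $\zeta=a$, \emph{without} invoking the Bleistein change of variables \eqref{eq:cov}. Writing $\tJ_B=I_1+I_2$ with
\beqs
I_1:=\int_0^a g(\zeta;t)\re^{\ri th(\zeta;t,\La)}\,\rd\zeta,\qquad I_2:=\int_a^{\infty\re^{\ri\phi}} g(\zeta;t)\re^{\ri th(\zeta;t,\La)}\,\rd\zeta,
\eeqs
I will analyse $I_1$ by Taylor-expanding the phase $h$ about $\zeta=0$ (producing the explicit Fresnel integral of \eqref{JB1+2-asymptotics} and the $\cO(t^{-1/2+3\delta/2}a^4)$ error), and $I_2$ by iterated integration by parts (producing the summands $T_j$ and the remainder of factorial growth). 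The final expansion is converted from $\tJ_B$ to $J_B$ via \eqref{eq:JBtJB}, using the identity $tf_0(\La,\la_c)/(1+\la_c)=tF(1-t^{\delta-1};\la)$, which follows on substituting $1-t^{\delta-1}=1/(1+\la_c)$ and $\la=\la_c(1+\La)$ into \eqref{eq:2old}.

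\textbf{The piece $I_1$.} Differentiating \eqref{eq:f1} gives $h(0;t,\La)=0$, $h'(0;t,\La)=\la_c\ln(1+\La)/(1+\la_c)$, and $h''(0;t,\La)=\la_c$; moreover, since $\ln(1+\La)$ enters \eqref{eq:f1} only through the single linear-in-$\zeta$ term $\la_c\zeta\ln(1+\La)$, all derivatives $h^{(k)}$ with $k\geq 2$ are $\La$-independent, and in particular $|h^{(k)}(\zeta;t,\La)|\lesssim\la_c$ for $k\geq 3$ and $\zeta\in[0,a]$. Completing the square in the quadratic model yields
\beqs
\int_0^a \re^{\ri t[h'(0)\zeta+\frac{1}{2}h''(0)\zeta^2]}\,\rd\zeta=\re^{-\ri\omega^2}\sqrt{\frac{2}{\la_c t}}\int_\omega^{\omega+a\sqrt{\la_c t/2}}\re^{\ri\xi^2}\,\rd\xi,
\eeqs
with $\omega$ as in \eqref{eq:omega}; multiplied by the prefactor from \eqref{eq:JBtJB} this is precisely the explicit Fresnel term in \eqref{JB1+2-asymptotics}. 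For $\sigma=1/2$ one has $g(\zeta;t)=(1-\zeta)^{-1/2}=1+\cO(\zeta)$ on $[0,a]$, so replacing $g$ by $1$ loses only a subdominant term. The cubic-and-higher Taylor remainder $r(\zeta):=h(\zeta)-h'(0)\zeta-\frac{1}{2}h''(0)\zeta^2=\cO(\la_c\zeta^3)$ in the exponent satisfies $|\re^{\ri tr(\zeta)}-1|\leq t|r(\zeta)|\lesssim t\la_c a^3$, contributing $\cO(t\la_c a^4)$ to $I_1$; multiplication by the prefactor $\sqrt{\la_c/(1+\la_c)}\sim t^{(\delta-1)/2}$ together with $\la_c\sim t^{\delta-1}$ yields the claimed $\cO(t^{(3\delta-1)/2}a^4)$ error in $J_B$.

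\textbf{The piece $I_2$.} On the contour from $a$ to $\infty\re^{\ri\phi}$, $h'$ has no zeros and the integrand decays exponentially at infinity (by \eqref{eq:phi1}--\eqref{eq:phi2}), so iterated integration by parts via $\rd(\re^{\ri th})=\ri th'\re^{\ri th}\,\rd\zeta$ is valid. Applying it $m$ times produces $m$ boundary terms at $\zeta=a$ and a final integral remainder $R_m$. The Fresnel integral produced by $I_1$ itself admits an asymptotic expansion at its upper endpoint $\omega+a\sqrt{\la_c t/2}$ via $\re^{\ri\xi^2}=(2\ri\xi)^{-1}\rd(\re^{\ri\xi^2})/\rd\xi$, and a direct computation (using $tf_0/(1+\la_c)+th(a)$ reproducing the phase at the upper endpoint and $\sqrt{t/2}\,h'(a)/(\omega+a\sqrt{\la_c t/2})=\sqrt{\la_c}$ to leading order) shows that the first few of the $I_2$-boundary terms cancel the leading orders of this endpoint expansion; after this reorganisation the residual boundary contributions are exactly the $m-3$ summands $T_1,\ldots,T_{m-3}$. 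The two quantitative inputs for bounding $R_m$ are the lower bound $|h'(\zeta;t,\La)|\geq c\la_c|\zeta|$ on the contour (uniform for $\La\geq 0$, from $h'(0)\geq 0$, $h''(0)=\la_c$, and analyticity) and the derivative bounds $|g^{(\ell)}|,|h^{(\ell+1)}|\lesssim\la_c$ for $\ell\geq 1$. Combining these with the Leibniz expansion of $(\rd/\rd\zeta)^m(g/h')$ yields
\beqs
|R_m|\lesssim (4m-5)!!\,(t\la_c)^{-m}\,a^{-(4m-4)}\,(\ln t)^{(4m-3)/2};
\eeqs
the double factorial arises from the combinatorics of differentiating $1/h'$ repeatedly, and the power of $\ln t$ tracks the worst-case size of $\ln(1+\La)$ across \eqref{eq:range}. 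Multiplication by $\sqrt{\la_c}\sim t^{(\delta-1)/2}$ and the use of $t\la_c\sim t^\delta$ convert this to the displayed bound on $J_B$.

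\textbf{Main obstacle.} The substantive difficulty is obtaining the sharp power of $a$ in $|R_m|$. Because $h'(\zeta)$ vanishes linearly at $\zeta=0$ in the limit $\La\to 0$, each differentiation of $g/h'$ introduces additional negative powers of $\zeta$, and the full Leibniz expansion of $(\rd/\rd\zeta)^m(g/h')$ contains terms whose worst-case magnitude at the lower endpoint $\zeta=a$ is $a^{-(4m-4)}$, far larger than the naive $a^{-m}$ one would obtain in a generic stationary-phase problem where $h'$ is bounded away from zero; the same bookkeeping produces the factorial coefficient $(4m-5)!!$. A parallel difficulty is maintaining uniformity in $\La$ across \eqref{eq:range}: since $\La$ can be as large as $\cO(t^{2(1-\delta)})$, $\ln(1+\La)$ can grow like $\ln t$, and every such logarithm appearing in the iterated derivatives must be carefully isolated and bounded by $\ln t$, which is what fixes the precise exponent $(4m-3)/2$ in the final error.
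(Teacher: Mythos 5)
Your overall plan---split at $\zeta=a$, Taylor-expand the phase near $\zeta=0$ to produce the finite-endpoint Fresnel integral, integrate by parts on the unbounded tail---matches the paper's proof in spirit. (The paper splits $J_B$ itself at $z=1-k$ with $k=t^{\delta-1}(1-a)$ rather than $\tJ_B$ at $\zeta=a$, but these are the same split under \eqref{Lambda-defn}, and working with $\tJ_B$ costs nothing.) Your treatment of the inner piece $I_1$ is correct and essentially identical to the paper's Lemma \ref{JB1-theorem}: $h'(0)=\la_c\ln(1+\La)/(1+\la_c)$, $h''(0)=\la_c$, the cubic Taylor remainder gives $\cO(t\la_c a^4)$, and the prefactor bookkeeping produces $\cO(t^{3\delta/2-1/2}a^4)$.

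However, the treatment of $I_2$ contains a structural confusion and an accounting error that would derail the proof.

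First, the accounting. You propose integrating by parts $m$ times, claiming the remainder $R_m$ satisfies $|R_m|\lesssim(4m-5)!!\,(t\la_c)^{-m}a^{-(4m-4)}(\ln t)^{(4m-3)/2}$. This is not consistent: $m$ integrations by parts naturally produce $(t\la_c)^{-m}$ and a singularity $a^{-2m}$ at the lower endpoint (each application of $\frac{\partial}{\partial\zeta}\frac{1}{\ri t h'}$ contributes roughly one power of $(\ri t\la_c\zeta^2)^{-1}$), together with a double factorial $(2m-1)!!$. The quantities $(4m-5)!!$ and $a^{-(4m-4)}$ in the theorem arise because the paper integrates by parts $N=2m-2$ times, then observes that the remainder $R_N$ and the boundary terms $T_{m-2},\dots,T_{N-1}$ are \emph{all} absorbed by the estimate $\cO((2N-1)!!\,t^{-1/2-(2N-1)\delta/2}(\ln t)^{(2N+1)/2}a^{-2N})$, and only $T_1,\dots,T_{m-3}$ are retained. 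The choice $N=2m-2$ is dictated by requiring that those retained $T_j$ are not smaller than the $\cO(t^{3\delta/2-1/2}a^4)$ error from $I_1$. With $m$ integrations as you propose, the exponents simply do not close.

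Second, the structural point: the claimed cancellation between the endpoint expansion of the finite Fresnel integral and the $I_2$ boundary terms is spurious and unneeded. In the theorem, $\sum T_j$ and the finite-endpoint Fresnel appear \emph{additively}: the $T_j$ are literally the boundary terms of the integration by parts on $J_{B2}$ evaluated at $z=1-k$ (definition \eqref{eq:Tj}), and the Fresnel term with upper limit $\omega+a\sqrt{\la_c t/2}$ is the leading approximation of $J_{B1}$. Since $J_B=J_{B1}+J_{B2}$ exactly, no cross-cancellation is invoked (or available). (It is true that if one then asymptotically expanded the Fresnel's upper endpoint, some cancellation against the $T_j$ would occur---because the total cannot depend on $a$---but the theorem as stated does not perform this expansion, and your narrative reverses cause and effect.)

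Finally, the hard combinatorics are not actually done. The factorial coefficient, the correct negative power of the endpoint variable, and the $(\ln t)$-power all hinge on a precise closed form for $\left(\frac{\partial}{\partial z}\cdot\frac{-1}{\ri t\,\partial F/\partial z}\right)^N(1-z)^{-1/2}$, which the paper establishes by induction (showing it equals $\frac{(1-z)^{-(2N+1)/2}}{(-\ri t)^N(\partial F/\partial z)^N}\sum_{m,n=0}^N A_{mn}z^{-m}(\partial F/\partial z)^{-n}$ with $A_{NN}=(-1)^N(2N-1)!!$), together with a uniform lower bound $|\partial F/\partial z|\geq D_-$ on the whole infinite contour and the bound $\Im F\geq0$ to kill the boundary contribution at infinity. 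Your stated lower bound $|h'(\zeta)|\gtrsim\la_c|\zeta|$ happens to give the right size near $\zeta=a$ but is false for large $|\zeta|$ (where $h'$ grows only logarithmically); the constant lower bound $\sim\la_c a$ is what one actually needs and proves. You correctly identify that controlling these issues uniformly in $\La$ is the crux, but you do not supply the argument.
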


The result of Theorem \ref{JB1+2-theorem} is a uniform expansion in the sense that the $\cO(\cdot)$ terms are independent of $\la$, but it is not quite a uniform asymptotic expansion in the sense of, e.g., \cite[Chapter VII, \S1]{Wo:89} since the ordering of the $T_j$ terms and the term involving the integral is not immediately specified.

\paragraph{Outline of the paper.}

In \S\ref{sec:outline} we recap Bleistein's ``global change of variable + integration by parts" method, supplemented with ideas from Erdelyi \cite[\S2.9]{Er:56} to bound the error term. In \S\ref{sec:proof} we prove Theorem \ref{thm:1} by adapting the method in \S\ref{sec:outline}.
In \S\ref{sec:Arran} we prove Theorem \ref{JB1+2-theorem}, and also show how the result of Theorem \ref{thm:1} follows from that of Theorem \ref{JB1+2-theorem}.

\section{Recap of Bleistein's ``global change of variable + integration by parts" method}\label{sec:outline}

In this section we give an overview of the  ``global change of variable + integration by parts" method for obtaining uniform asymptotics for a stationary point near an endpoint.
As discussed in \S\ref{sec:intro}, this method was introduced by Bleistein \cite[Section 5]{Bl:66}
and appears in, e.g., \cite[Chapter VII]{Wo:89}.

We follow the approach of Bleistein, but perform the integration by parts slightly differently, following Erdelyi's treatment of the method of stationary phase in \cite[\S2.9]{Er:56}. The latter treatment makes it
easier to rigorously estimate the error in the leading-order asymptotics for $\tJ_B$. These different approaches are
 discussed further  in \S\ref{sec:Erd} and \S\ref{sec:BW}.

\subsection{Notation and assumptions}

Let
\beq\label{eq:E1}
J(t;\La):= \int_0^{\infty \re^{\ri \phi}} g(\zeta) \,\exp\left( \ri t h(\zeta;\La)\right) \rd \zeta,
\eeq
with $\La\in [0, \La^*]$.

We assume that $g$ and $h$ are analytic functions of $\zeta$, apart from possible branch points and branch cuts lying away from the contour of integration.
We assume that $h$ has one stationary point in $\Com$, whose location depends on $\La$. We let the location of this stationary point be at $\zeta= s(\La)$.
Without loss of generality we assume that $s(0)=0$, i.e.,~when $\La=0$ the stationary point is at the end point of integration.
We assume that $\Re s(\La)\leq 0$ (so that the stationary point is not on the contour of integration for $\La>0$). The case when $s(0)=0$ but $\Re s(\La)>0$ for $\La>0$ (i.e.~the stationary point is on the contour of integration for $\La>0$, at least after a suitable contour deformation) can be treated similarly -- see \cite[Chapter VII, \S3]{Wo:89}.

We assume that $\phi$ is chosen so that the integral converges. In particular we assume that, when $\zeta= |\zeta|\exp(\ri \phi)$, $\Im h(\zeta;\La)\tendi$ as $|\zeta|\tendi$. We also assume that $g(\zeta)$ grows at most polynomially in $\zeta$ as $|\zeta|\tendi$.

The goal is to find the asymptotics of $J(t;\La)$ as $t\tendi$, valid for $\La\in [0,\La^*]$.

\bre[The form of the exponent]
The method we outline in this section would also apply to the integral
\beqs
\tJ(t;\La):= 
\int_0^{\infty \re^{\ri \phi}} g(\zeta)\, \exp\left(- t\,h(\zeta;\La)\right)  \rd \zeta,
\eeqs
with $\Re h(\zeta;\La)\tendi$ as $|\zeta|\tendi$ (and is presented for this case in \cite[Section 6]{Bl:66} and \cite[Chapter VII, \S3]{Wo:89}); i.e.~our choice in \eqref{eq:E1} of $J(t;\la)$ being a ``stationary-phase-type" integral is not restrictive.
\ere

\subsection{Definition of the global change of variables}\label{sec:gcov}

The simplest example of a function satisfying the assumptions on $h$ above is
\beqs
h(\zeta;a) = \half \zeta^2 + a\zeta.
\eeqs
We therefore seek a change of variables $\zeta= \zeta(u)$ such that
\beq\label{eq:E2}
h(\zeta;\La) - h(0;\La) = \half u^2 + au;
\eeq
observe that the endpoint $\zeta=0$ is mapped to $u=0$.
We then fix the value of $a$ by demanding that $\zeta= s(\La)$ (i.e., $\zeta$ is at the stationary point) when $u=-a$. We choose this sign-convention for $a$ motivated by the case when (i) $s(\La)$ is real and (ii) $h(\zeta;\La)$ is real for real $\zeta$, since then $a\geq 0$ (and so the stationary point at $u=-a$ is on the negative real axis).

Thus, we let
\beq\label{eq:E3}
a:=
\sqrt{2}\sqrt{\big.h(0;\La)- h(s(\La);\La)},
\eeq
and then
\beq\label{eq:E4}
u= - \sqrt{2}\sqrt{\big.h(0;\La)-h(s(\La);\La)} + \sqrt{2}\sqrt{\big.h(\zeta;\La) - h(s(\La);\La)},
\eeq
with the branch cut chosen on the negative real axis.
The change of variables
\eqref{eq:E4} is well-defined since, from \eqref{eq:E2},
\beqs
\diff{u}{\zeta} = \diff{h}{\zeta}(\zeta;\La)\frac{1}{ \sqrt{2}\sqrt{\big.h(\zeta;\La)- h(s(\La);\La)}},
\eeqs
and both numerator and denominator have a simple zero at $\zeta= s(\La)$.

Under this change of variables, $J$ defined by \eqref{eq:E1} becomes
\beq\label{eq:E6}
J(t;\La)= \exp\left( \ri t h(0;\La)\right)\intFokas{0} g\left(\zeta(u)\right) \diff{\zeta}{u}(u) \exp \left( \ri t \left( \half u^2 + au\right)\right)  \rd u,
\eeq
where we have fixed the endpoint of integration to give the fastest decay of the exponent as $|u|\tendi$.
We now assume, without loss of generality, that $h(0;\La)=0$, so that the factor in front of the integral on the right-hand side of \eqref{eq:E6} is one.

\subsection{Integration by parts (via Erdelyi)}\label{sec:Erd}

There are now two options:
\ben
\item Integrate \eqref{eq:E6} directly by parts (following Erdelyi's treatment in \cite[\S2.9]{Er:56} of \eqref{eq:E6} with $a=0$).
\item Let
\beq\label{eq:E4c}
F(u):= g\left(\zeta(u)\right) \diff{\zeta}{u}(u),
\eeq
define $a_0, b_0$, and $G(u)$ so that
\beq\label{eq:BW1}
F(u) = a_0 + b_0 u + u(u+a) G(u),
\eeq
and then integrate by parts (following Bleistein \cite[Section 5]{Bl:66}).
\een
In the case of $\tJ_B$, we are only interested in obtaining the leading-order term with a rigorous error estimate, and
it turns out that this goal is more easily achieved via Option 1 rather than Option 2.
If one wants to obtain a uniform asymptotic expansion to all orders, it turns out that Option 2 is the better option. 

We therefore concentrate on Option 1, but then outline Option 2 in \S\ref{sec:BW}.
The key point about Option 2 is that the introduction of $u+a= \diff{}{u}( \half u^2 +au)$ in the integrand means that the sequence of (special) functions, with respect to which the asymptotics are found, can easily be seen to be uniform with respect to $a$, and hence with respect to $\La$ (see \eqref{eq:BW3} and the associated discussion below).

\ble[Integration by parts using Cauchy's formula for repeated integration]\label{lem:Cauchy}
Let $\widetilde{F}(r):= F(r \re^{\ri \pi/4})$ and $\widetilde{f}(r):= f(r \re^{\ri \pi/4})$. If $\widetilde{F}$, $\widetilde{f} \in C^\infty[0,\infty)\cap L^1(0,\infty)$ then, for $n\in \mathbb{Z}$,
\beq\label{eq:E4a}
\intFokas{0} F(u) f(u)\, \rd u = \sum_{k=0}^n  F^{(k)}(0) \, \phi_f^{[k+1]}(0) +
 \intFokas{0} F^{(n+1)}(u) \, \phi_f^{[n+1]}(u) \, \rd u,
\eeq
where
\beqs
\phi_f^{[k+1]}(u):= \frac{1}{k!} \intFokas{u} (v-u)^k f(v)\, \rd v.
\eeqs
\ele

\bpf[Outline of the proof]
This formula follows from integration by parts,
using the fact that
\beqs
\intFokas{u} \phi_f^{[k]}(v) \,\rd v = \phi_f^{[k+1]}(u),
\eeqs
and thus $\phi_f^{[k+1]}(u)$ is the $(k+1)$th repeated integral of $f$ -- this is Cauchy's formula for repeated integration.
\epf

\

We apply Lemma \ref{lem:Cauchy} to our integral \eqref{eq:E6}
with $F(u)$ defined by \eqref{eq:E4c} and
\beq\label{eq:E4ca}
f(u):=\exp\left( \ri t \left(\half u^2 + au\right)\right).
\eeq
It is convenient to use the notation $\Phi^{[k+1]}$ for $\phi_f^{[k+1]}$ with $f$ defined by \eqref{eq:E4ca}, and explicitly indicate that $\Phi^{[k+1]}$ depends on $t$ and $\La$ as well as $u$, i.e.,~we define $\Phi^{[k+1]}(u;t,\La)$ by
\beq\label{eq:E8a}
\Phi^{[k+1]}\left(u;t,\La\right):= \frac{1}{k!} \intFokas{u} (v-u)^k \exp\left( \ri t \left(\half v^2 + av\right)\right)  \rd v.
\eeq
Then \eqref{eq:E4a} becomes
\beq\label{eq:E4d}
J(t;\La)=\sum_{k=0}^n F^{(k)}(0) \, \Phi^{[k+1]}\left(0;t,\La\right) +
 \intFokas{0} F^{(n+1)}(u) \, \Phi^{[n+1]}\left(u;t,\La\right)  \rd u.
\eeq

\bre[In general $F$ depends on $\La$]\label{rem:Fdepend}
In the rest of this section we assume that $F(u)$ in \eqref{eq:E4c} is  independent of $\La$ and $t$ (as is done in \cite{Bl:66} and \cite{Wo:89}).
As in \S\ref{sec:intro} we emphasise, however, that this is not in general true. Indeed, the change of variables \eqref{eq:E4} depends on $\La$ and thus so does $F(u)$ (defined in \eqref{eq:E4c}).
\ere

\subsection{The leading order behaviour: transition between ``stationary-point" and ``integration-by-parts" contributions}\label{sec:phi}

Before proving that \eqref{eq:E4d} is indeed an asymptotic expansion of $J$ as $t\tendi$, we look at the leading-order term $F(0) \EPhi(0;t,\La)$. Furthermore, since we are assuming that $F(0)$ is independent of $t$ and $\La$, we focus on $\EPhi(0;t,\La)$. We hope to be able to see the transition between the $t^{-1/2}$ ``stationary-point" contribution and the $t^{-1}$ ``integration-by-parts" contribution.

Using the change of variables $\xi =  (v+a)\sqrt{t/2}$ in \eqref{eq:E8a} we have
\beqs
\EPhi\left(u;t,\La\right)= \sqrt{\frac{2}{t}} \exp\left( - \frac{\ri a^2}{2}t\right) \intFokas{(u+a)\sqrt{t/2}} \exp(\ri \xi^2) \, \rd \xi.
\eeqs
The above integral can be expressed in terms of the Fresnel integral $\mathscr{F}(z)$ \cite[Equation 7.2.6]{Di:17}, but we will not need this connection for what follows.

The leading-order behaviour of $J$ is therefore dictated by 
\beq\label{eq:E11}
\EPhi\left(0;t,\La\right)= \sqrt{\frac{2}{t}} \exp\left( - \frac{\ri a^2}{2}t\right) \intFokas{a\sqrt{t/2}} \exp(\ri \xi^2) \, \rd \xi.
\eeq
We now determine the $t\tendi$ asymptotics of \eqref{eq:E11} for different values of $a$,
 where $a$ depends on $\La$ via \eqref{eq:E3}. Our aim is to have   asymptotics explicit in $\La$,
 thus we make everything explicit in $a$.

If $|a|\sqrt{t}= \cO(1)$ as $t\tendi$, then the integral in \eqref{eq:E11} is $\cO(1)$ and $\EPhi(0;t,\La) = \cO(t^{-1/2})$; i.e., the asymptotics expected from a stationary point.
If $|a|\sqrt{t}\tendi$ as $t\tendi$ then,
by integration by parts (or by recalling the asymptotics of Fresnel integrals \cite[\S7.12]{Di:17}), we find that
\beq\label{eq:9alt}
\intFokas{z} \re^{\ri \xi^2} \, \rd \xi = \re^{\ri z^2} \left( -\frac{1}{2\ri z} + \cO\left( \frac{1}{z^3}\right)\right),
\eeq
and so
\beq\label{eq:E13}
\EPhi\left(0;t,\La\right) = \sqrt{\frac{2}{t}}\left( -\frac{1}{\ri a \sqrt{2t}} + \cO \left( \frac{1}{(a\sqrt{t})^3}
\right)\right).
\eeq
If $|a|= \cO(1)$, then $\EPhi(0;t,\La) = \cO(t^{-1})$; i.e., the asymptotics expected from integration by parts away from a stationary point.

\subsection{Rigorously estimating the error term (via Erdelyi)}\label{sec:rigor}

We now outline how it can be shown that \eqref{eq:E4d} is indeed an asymptotic expansion of $J$; i.e.,
\beq\label{eq:E7}
\intFokas{0} F^{(n+1)}(u) \,\Phi^{[n+1]}\left(u;t,\La\right) \rd u =o\Big( F^n(0)\,\Phi^{[n+1]}\left(0;t,\La\right)\Big) \quad\tas t\tendi.
\eeq

Since we are only interested in the leading-order asymptotics of our particular example $\tJ_B$, we will present the method for proving \eqref{eq:E7} for $n=0$; the case of general $n$ is very similar (see \cite[\S2.9]{Er:56}).

Furthermore, since we have assumed that $F(0)$ is independent of $\La$ and $t$ (see Remark \ref{rem:Fdepend}), we only need to prove that
\beq\label{eq:E8}
\intFokas{0} F'(u) \EPhi\left(u;t,\La\right) \rd u = o\Big( \EPhi\left(0;t,\La\right)\Big) \quad\tas t\tendi;
\eeq
that is, given $\eps >0$, there exists a $C>0$ (independent of $\La$ and $t$) such that
\beq\label{eq:E8aa}
\left|\intFokas{0} F'(u) \EPhi\left(u;t,\La\right) \rd u \right|\leq C \eps\left| \EPhi\left(0;t,\La\right)\right|,
\eeq
for $t$ sufficiently large.
In what follows, we will drop the superscript $[1]$ on $\Phi$; i.e.,~we define $\Phi(u;t,\La):= \EPhi(u;t,\La)$.

The main idea for proving \eqref{eq:E8aa} is that, when $u= |u|\re^{\ri \pi/4}$ with $|u|\geq \delta$ (with $\delta$ independent of $t$),
$\EPhia(u;t,\La)$ decays faster than $\EPhia(0;t,\La)$ as $t\tendi$. Indeed, for such a $u$, integration by parts via \eqref{eq:9alt} shows that the leading-order behaviour of $\EPhia(u;t,\La)$ contains the term
\beqs
\exp \left( \frac{\ri t}{2} \left(u^2 + 2 ua\right)\right);
\eeqs
since $\arg u = \pi/4$ this leads to exponential decay in $t$, as opposed to the algebraic decay in $\EPhia(0;t,\La)$ seen in \eqref{eq:E11} and \eqref{eq:E13}.
For $n \geq 1$, this argument is slightly different -- see \cite[\S2.9, Equations 14 and 16]{Er:56} -- but the faster  decay of $\EPhia(u;t,\La)$ in comparison to $\EPhia(0;t,\La)$ still holds.

Based on this observation, the method consists of the following steps.
\ben
\item Control $\EPhia(u;t,\La)$ in terms of $\EPhia(0;t,\La)$ for $u=|u|\re^{\ri \pi/4}$ (for $t$ sufficiently large) by proving, for example, that
\beq\label{eq:E15}
\big|\EPhia(|u|\re^{\ri \pi/4};t,\La)\big| \leq C_1 \big|\EPhia(0;t,\La)\big|,
\eeq
for $|u|>0$ and with $C_1$ independent of $t$ and $\La$.
\item Given $\eps>0$, choose $\delta_1(\eps)$ be such that
\beq\label{eq:E16}
\int_0^{\delta_1(\eps)\re^{\ri \pi/4}} \big|F'(u)\big| \, \rd u \leq \eps.
\eeq
Note that, under our assumption that $F$ is independent of $\La$ and $t$, $\delta_1(\eps)$ is independent of $\La$ and $t$.
\item Use the exponential decay of $\EPhia(u;t,\La)$ for $u=|u|\re^{\ri \pi/4}$ with $|u|\geq \delta_1(\eps)$ to show that
\beq\label{eq:E17}
\left|
\int_{\delta_1(\eps)}^{\infty \re^{\ri \pi/4}} F'(u) \EPhia\left(u;t,\La\right) \rd u
\right| \leq C_2 \,\eps \,\big|\EPhia\left(0;t,\La\right)\big|,
\eeq
with $C_2$ independent of $t$ and $\La$, and for $t$ sufficiently large.
\item Combine the results \eqref{eq:E15}, \eqref{eq:E16}, and \eqref{eq:E17}, via
\begin{align*}
\left|\intFokas{0}F'(u)\EPhia\left(u;t,\La\right)  \rd u \right| &\leq
\left|\int_0^{\delta_1(\eps)\re^{\ri \pi /4}}F'(u)\EPhia\left(u;t,\La\right)  \rd u \right|
+
\left|\int_{\delta_1(\eps)}^{\infty \re^{\ri \pi/4}}F'(u)\EPhia\left(u;t,\La\right) \rd u \right|,\\
& \leq \eps \left( C_1+ C_2\right) \big|\EPhia\left(0 ;t,\La\right)\big|,
\end{align*}
which proves the required result \eqref{eq:E8aa}.
\een

\subsection{Integration by parts (via Bleistein)}\label{sec:BW}

We now outline the method of Bleistein. As stated at the beginning of \S\ref{sec:Erd}, this method starts by rewriting the integrand before performing the integration by parts.
Indeed, we define $a_0$, $b_0$, and $G(u)$ by
\beqs
a_0:= F(0), \qquad b_0 := \frac{1}{a}\left( F(0)- F(-a)\right), \quad\tand\quad
G(u):= \frac{F(u)-a_0 -b_0u}{u(u+a)},
\eeqs
so that \eqref{eq:BW1} holds.
Substituting  \eqref{eq:BW1} into the expression \eqref{eq:E6} for $J$ (and recalling that we have assumed that $h(0,\La)=0$), we find
\beqs
J(t;\La)= \frac{a_0}{\sqrt{t}} U(a\sqrt{t}) - \ri \frac{b_0}{t} U'(a\sqrt{t}) + \intFokas{u} u(u+a) G(u) \exp\left( \ri t \left(\half u^2 +au\right)\right) \rd u,
\eeqs
where
\beqs
U(x):= \intFokas{0} \exp\left( \ri \left(\half v^2 + xv\right)\right)\rd v;
\eeqs
(note that $U(x)$ can be expressed in terms of the parabolic cylinder function; see \cite[Chapter VII, Equation 3.26]{Wo:89}).
Then, integrating by parts using the fact that
\beqs
\diff{}{u} \left[\frac{1}{\ri t}\exp\left( \ri t \left(\half u^2 +au\right)\right) \right]= (u+a) \exp\left( \ri t \left(\half u^2 +au\right)\right),
\eeqs
we find
\beq\label{eq:E18}
J(t;\La)= \frac{a_0}{\sqrt{t}} U(a\sqrt{t}) - \ri \frac{b_0}{t} U'(a\sqrt{t}) - \frac{1}{\ri t}\intFokas{u} \diff{}{u}\big[ u\, G(u)\big] \exp\left( \ri t \left(\half u^2 +au\right)\right) \rd u.
\eeq
The key point now is that the integral on the right-hand side of \eqref{eq:E18} is of the same form as $J$.
Repeating this process, one  obtains an expansion in terms of $U(a\sqrt{t})$ and $U'(a\sqrt{t})$ or, more precisely, with respect to the asymptotic sequence
\beq\label{eq:BW3}
\frac{1}{t^{n+1/2}} U(a\sqrt{t}) + \frac{1}{t^{n+1}} U'(a\sqrt{t}),
\eeq
which is uniform with respect to the parameter $a$, and hence with respect to $\La$.
This is in contrast to Erdelyi's approach above, where the asymptotic sequence is $\Phi^{[k+1]}(u;t,\La)$ defined by \eqref{eq:E8a}. Since the integrand of each $\Phi^{[k+1]}(u;t,\La)$ is different for each $k$, more work is needed to obtain a sequence that can be proven to be uniform with respect to $\La$.

From our point of view of estimating the error in the leading-order term, it is easier  to estimate the left-hand side of \eqref{eq:E8} involving $F$ than to estimate the integral in \eqref{eq:E18} involving $G$, hence we focus on Erdelyi's method as opposed to Bleistein's.

\section{Proof of Theorem \ref{thm:1} via the method outlined in \S\ref{sec:outline}}\label{sec:proof}

As outlined in \S\ref{sec:outline} there are 4 steps:
\ben
\item[Step 1:] Make a global change of variables (\S\ref{sec:gcov}).
\item[Step 2:] Integrate by parts (\S\ref{sec:Erd}).
\item[Step 3:] Find the asymptotics of $\Phi(u;t,\La)$ (\S\ref{sec:phi}).
\item[Step 4:] Rigorously estimate the error term by splitting the integral (\S\ref{sec:rigor}).
\een

\subsection{Step 1: Global change of variables}

We make a global change of variables such that
\beqs
f_1(\zeta;t,\La) - f_1(0;t,\La) = \alpha u^2 + \beta u.
\eeqs
In \S\ref{sec:gcov} we chose $\alpha=1/2$ and then chose $\beta$ to fix the location of the stationary point. In the case of $\tJ_B$, it turns out to be slightly more convenient to chose $\alpha$ and $\beta$  so that $\zeta =u + o(1)$ as $u\tendo$.

\ble[The integral under the global change of variables]\label{lem:1}
Define the variable $u$ implicitly by
\beq\label{eq:2}
f_1(\zeta;t,\La) = \frac{\la_c}{2}(1+ \la_c)u^2 + \la_c \ln(1+ \La) u.
\eeq
Then
\beq\label{eq:3}
\diff{\zeta}{u}= \frac{ \ln(1+\La) + (1+\la_c) u}{\ln(1+ \La) + \ln(1+ \la_c \zeta)- \ln(1-\zeta)},
\eeq
and
\beq\label{eq:2a}
\tJ_B= \int_0^{\infty \re^{\ri \pi/4}} g\left(\zeta(u);t\right) \diff{\zeta}{u}(u) \exp \left( \frac{\ri \la_c t}{2} \left(u^2 + \frac{2\ln(1+\La)  }{1+ \la_c}u\right)\right)\rd u.
\eeq
\ele

\bpf[Proof of Lemma \ref{lem:1}]
Differentiating the definition of $f_1$ \eqref{eq:f1}, we have
\beq\label{eq:1}
\diff{f_1}{\zeta}(\zeta) = \la_c \ln\left( \frac{(1+\Lambda)(1+ \la_c \zeta)}{1-\zeta}\right),
\eeq
and differentiating \eqref{eq:2} we find
\beq\label{eq:s1}
\diff{f_1}{\zeta} = \la_c (1+\la_c) u \diff{u}{\zeta} + \la_c \ln(1+ \La)\diff{u}{\zeta};
\eeq
combining these two equations, we obtain \eqref{eq:3}.

From \eqref{eq:1} and \eqref{eq:stat_point2}, we see that, when $\La>0$, $\rd f_1/\rd \zeta\neq 0$ for all $\zeta$ on the contour of $\tJ_B$ \eqref{eq:0}. By \eqref{eq:s1}, $\rd \zeta /\rd u\neq 0$ for all $u$ on the image of the contour of $\tJ_B$, and thus the change of variables \eqref{eq:2} is well-defined for all $\zeta$ on the contour of $\tJ_B$.

When $\La= 0$, $(\rd f_1/\rd \zeta)(0)=0$, but l'H\^{o}pital's rule applied to \eqref{eq:3} implies that $(\rd \zeta/\rd u)(0)=1$, and the change of variables \eqref{eq:2} is again well-defined
for all $\zeta$ on the contour of $\tJ_B$.

Making the change of variables \eqref{eq:2} in the integral \eqref{eq:0} we obtain \eqref{eq:2a}. Recall that in \eqref{eq:0} the endpoint of integration was fixed by the requirement that $\Im f_1(\zeta;t,\La)>0$ for large $|\zeta|$. From \eqref{eq:2} we see that this requirement becomes $\Im u^2>0$ for large $|u|$.
\epf

\bre[Taylor series expansion of $f_1(\zeta;t,\La)$ about $\zeta=0$]\label{rem:Taylor}
By Taylor's theorem,
\beqs
f_1(\zeta;t,\La) = \la_c \ln(1+\La) \zeta + \frac{\la_c}{2} (1+\la_c) \zeta^2 + \cO (\la_c \zeta^3) \quad\tas \zeta\tendo,
\eeqs
and so the change of variables \eqref{eq:2} means that $u= \zeta + \cO(\zeta^2)$ as $\zeta\tendo$;  one can check that the omitted constant in the $\cO(\zeta^2)$ can be taken to be independent of $\La$.
\ere

\subsection{Step 2: Integration by parts}

\ble\label{lem:ibps}
Let
\beq\label{eq:F}
F(u;t,\La) := g\left(\zeta(u);t\right) \diff{\zeta}{u}(u),
\eeq
(where $g(\zeta;t)$ is defined by \eqref{eq:g})
and
\beq\label{eq:5}
\Phi(u;t,\La):= \int_{u}^{\infty \re^{\ri \pi/4}} \exp \left( \frac{\ri \la_c t}{2} \left(v^2 + \frac{2\ln (1+\La)}{1+\la_c}v\right)\right) \rd v.
\eeq
Then
\beq\label{eq:key}
\tJ_B(t;\La)= \Phi(0;t,\La) + \intFokas{0} F'(u;t,\La) \, \Phi(u;t,\La) \, \rd u,
\eeq
where $'$ denotes differentiation with respect to $u$.
\ele

\bpf[Proof of Lemma \ref{lem:ibps}]
This follows from \eqref{eq:2a} and \eqref{eq:E4a} with $F$ defined by \eqref{eq:F}, and $f$ defined by
\beqs
f(u;t,\La) := \exp\left( \frac{\ri \la_c t}{2} \left(u^2 + \frac{2\ln (1+\La)}{1+\la_c}u\right)\right).
\eeqs
Now, when $u=0$, $\zeta=0$ and thus
$g(\zeta(0);t)= g(0;t)=1.$
The expression \eqref{eq:3} implies that  $(\rd \zeta/\rd u)(0)=1$ when $\La>0$, and in the Proof of Lemma \ref{lem:1} it was shown that $(\rd \zeta/\rd u)(0)=1$ when $\La=0$; therefore $F(0)=1$.
\epf

\subsection{Step 3: Asymptotics of $\Phi(u;t,\La)$}

We now need to compute the asymptotics of $\Phi(u;t,\La)$ as $t\tendi$, uniformly for $\La$ in the range \eqref{eq:range} in the cases $u=0$ and $u= |u|\re^{\ri \pi/4}$ with $|u|>0$.

\ble[Asymptotics of $\Phi(u;t,\La)$]\label{lem:Phi}

\

\noi (i) $u=0$: with $\omega$  defined by \eqref{eq:omega}, we have
\beq\label{eq:7a}
\Phi\left(0;t,\La\right)= \exp\left( - \frac{\ri \la_c t}{2} \left( \frac{\ln (1+ \La)}{1+\la_c} \right)^2\right) \sqrt{\frac{2}{\la_c t}} \left( \intFokas{\omega} \re^{\ri \xi^2}\, \rd \xi\right).
\eeq
If $\Lambda$ is such that $\omega= \cO(1)$ 
as $t\tendi$ (e.g.~$\La=0$), then the integral on the right-hand side of \eqref{eq:result1} is an $\cO(1)$ quantity as $t\tendi$ (with the omitted constant independent of $\La$).
Furthermore, if $\Lambda$ is such that $\omega\rightarrow\infty$ as $t\tendi$, then
\beq\label{eq:7b}
\Phi(0;t,\La) = 
\sqrt{\frac{2}{\la_c t}}
\left( \frac{-1}{2\ri \omega} + \cO\left( \frac{1}{\omega^3}\right)\right) \quad \tas t\rightarrow\infty,
\eeq
where the omitted constant in $\cO(1/\omega^3)$ is independent of $\La$.

\

\noi (ii) $u= |u|\re^{\ri \pi/4}$ with $|u|>0$:
\begin{align}\nonumber
\Phi(u;t,\La) = &
\sqrt{\frac{2}{\la_c t}} \exp\left(  \frac{\ri\la_c t}{2}\left(u^2 + \frac{2 \ln(1+\La)}{1+ \la_c}\right)u\right)
\\
&
\qquad\qquad \times
\left[ -\left(2 \ri \left( \sqrt{\frac{\la_c t}{2}}u + \omega\right)\right)^{-1} + \cO\left( \left( \sqrt{\frac{\la_c t}{2}}u + \omega\right)^{-3}\right)\right]\tas\sqrt{\la_c t} |u|\tendi,
\label{eq:8a}
\end{align}
independently of whether $\omega = \cO(1)$ or $\omega\tendi$, where the omitted constant in the $\cO(\cdot)$ is independent of $\La$.
\ele

\bpf[Proof of Lemma \ref{lem:Phi}]
Making the change of variable
\beqs
\xi = \sqrt{\frac{\la_c t}{2}} \left( v + \frac{\ln (1+ \La)}{1+ \la_c}\right)
\eeqs
in \eqref{eq:5}, we have
\beq\label{eq:8b}
\Phi(u;t,\La) = \exp\left( - \frac{\ri \la_c t}{2} \left( \frac{\ln (1+ \La)}{1+ \la_c}\right)^2\right) \sqrt{\frac{2}{\la_c t}} \intFokas{\sqrt{\frac{\la_c t}{2}} u+ \omega} \re^{\ri \xi^2} \rd \xi,
\eeq
where $\omega$ is defined by \eqref{eq:omega}. The expression \eqref{eq:7a} follows immediately, and the asymptotics \eqref{eq:7b} and \eqref{eq:8a} then follow  from \eqref{eq:9alt}.
\epf

\

When estimating the integral in \eqref{eq:key} in Step 4, we also need the following lemma (see \eqref{eq:E15}).

\ble\label{lem:3.4}
There exists a $C_1>0$, independent of $t$ and $\La$, such that
\beqs
\Big| \Phi\left( |u|\re^{\ri \pi/4};t,\La\right)\Big| \leq C_1 \big|\Phi(0;t,\La)\big|,
\eeqs
for all $|u|>0$, for all $t>0$, and for all $\La$ in the range \eqref{eq:range}.
\ele

\bpf[Proof of Lemma \ref{lem:3.4}]
From \eqref{eq:8b},
\beqs
 \Phi\left( |u|\re^{\ri \pi/4};t,\La\right) = \exp \left( - \frac{\ri \la_c t}{2} \left( \frac{\ln(1+ \La)}{1+ \la_c}\right)^2 \right) \sqrt{\frac{2}{\la_c t}} \Psi\left( \sqrt{\frac{\la_c t}{2}} |u|; \omega\right),
\eeqs
where
\beqs
\Psi(a;\omega) := \intFokas{a\re^{\ri \pi/4} + \omega} \re^{\ri \xi^2}\, \rd \xi.
\eeqs
It is therefore sufficient to prove that there exists a constant  $C_2>0$ such that
\beqs
|\Psi(a;\omega) - \Psi(0, \omega)| \leq C_2 |\Psi(0;\omega)| \quad \tfa a>0 \text{ and for all }\omega>0.
\eeqs
Furthermore, by using the asymptotics \eqref{eq:9alt} in the definition of $\Psi(a;\omega)$, we see that it is sufficient to prove that, given $\omega_0$ there exists a $C_3(\omega_0)>0$ such that
\beq\label{eq:inter1}
|\Psi(a;\omega) - \Psi(0, \omega)| \leq C_3(\omega_0) \quad \tfa \omega \leq \omega_0,
\eeq
and
\beq\label{eq:inter2}
|\Psi(a;\omega) - \Psi(0, \omega)| = \cO(1/\omega)\quad \text{ when } \omega \tendi.
\eeq

Now, from the definition of $\Psi(a;\omega)$, and using the successive substitutions $\xi = \re^{\ri \pi/4}s + \omega$ and $p=s+ \omega/\sqrt{2}$, we obtain the following estimates:
\begin{align*}
\big| \Psi(a;\omega) - \Psi(0;\omega)\big| &= \left| \int_\omega^{a\re^{\ri\pi/4}+ \omega} \re^{\ri \xi^2} \, \rd \xi\right|
= \left|
\int_0^a \re^{\ri \omega^2}  \re^{-s^2 + 2 \ri \re^{\ri \pi/4}s\omega}\re^{\ri\pi/4} \rd s
\right| 
\leq \int_0^a \re^{-s^2 - \sqrt{2}s\omega}\rd s \\
&= \re^{\omega^2/2} \int_{\omega/\sqrt{2}}^{a+ \omega/\sqrt{2}} \re^{-p^2} \rd p
\leq \re^{\omega^2/2} \int_{\omega/\sqrt{2}}^{\infty} \re^{-p^2} \rd p=
\begin{cases}
\cO(1) & \text{ if } \omega= \cO(1),\\
\cO(1/\omega) & \text{ if } \omega \tendi,
\end{cases}
\end{align*}
where in the last step we have used asymptotics analogous to \eqref{eq:9alt}, which can be obtained by integration by parts. Therefore,  we have proved \eqref{eq:inter1} and \eqref{eq:inter2}, and hence the proof is complete.
\epf

\subsection{Step 4: Estimating the error term}

\ble[Asymptotics of the error term]\label{lem:error}
With $F(u;t,\La)$ defined by \eqref{eq:F} and $\Phi(u;t,\La)$ defined by \eqref{eq:5}, we have the estimate
\beq\label{eq:9a}
\intFokas{0} F'(u;t,\La) \Phi(u;t,\La)\, \rd u = o\Big(\Phi(0;t,\La)\Big) \quad\tas t\tendi,
\eeq
where the $o(\cdot)$ is independent of $\La$.
\ele

We will prove Lemma \ref{lem:error} via the splitting argument in \S\ref{sec:rigor}, but we first show how establishing this result proves Theorem \ref{thm:1}.

\

\bpf[Proof of Theorem \ref{thm:1} assuming Lemma \ref{lem:error}]
Combining \eqref{eq:key} and \eqref{eq:9a} we have
\beqs
\tJ_B = \Phi(0;t,\La) + o\Big( \Phi(0;t.\La)\Big),
\eeqs
and then the results \eqref{eq:result1} and \eqref{eq:result2} follow from using \eqref{eq:7a} and \eqref{eq:7b} respectively.
\epf

\

\noi Therefore, we only need to prove Lemma \ref{lem:error}. The following lemma gives us the required properties of each part of the split integral.

\ble[Properties of each part of the split integral]\label{lem:3.7}

\

\noi 1) Given $\eps>0$, there exists $t_0>0$ and $\delta_1(\eps)>0$
such that
\beq\label{eq:10}
\int_0^{\delta_1(\eps) \re^{\ri \pi/4}} \big| F'(u;t,\La)\big| \big|\rd u\big| \leq \eps
\eeq
for all $t\geq t_0$ and for all $\La$ in the range \eqref{eq:range}.

\noi 2) There exists $t_1>0$, $m>0$, independent of $t$ and $\La$ and $C_2(\delta_1(\eps))>0$,
such that
\beq\label{eq:11}
\int_{\delta_1(\eps)\re^{\ri \pi/4}}^{\infty \re^{\ri \pi/4}}
\exp\left( -\frac{\la_ct}{4} |u|^2\right)\big| F'(u;t,\La)\big| \big| \rd u \big| \leq C_2\big(\delta_1(\eps)\big) \,t^m,
\eeq
for all $t\geq t_1$ and for all $\La$ in the range \eqref{eq:range}.
\ele

\bpf[Proof of Lemma \ref{lem:3.7}]
From the definition of $F(u;t,\La)$ in \eqref{eq:F}, we have
\begin{align}\nonumber
F'\left(u;t,\La\right) = &\diff{^2 \zeta}{u^2}(u) \left( 1- \zeta(u)\right)^{-1/2} \left(1+ \la_c \zeta(u)\right)^{\sigma-1/2} +\half \left( \diff{\zeta}{u}(u)\right)^2 \left(1-\zeta(u)\right)^{-3/2} \left(1+ \la_c \zeta(u)\right)^{\sigma-1/2} \\
&+ \left( \diff{\zeta}{u}(u)\right)^2 \left(1- \zeta(u)\right)^{-1/2}\left(\sigma-\half\right) \left(1+ \la_c \zeta(u)\right)^{\sigma-3/2} \la_c.
\label{eq:12}
\end{align}
1) For the proof of \eqref{eq:10}, observe that it is sufficient to prove that there exists $u_0>0$, $t_0>0$, and $C_0>0$, such that
\beq\label{eq:33}
\big|F'\left(u;t,\La\right)\big| \leq C_0,
\eeq
for all $u$ such that $u= |u|\re^{\ri \pi/4}$ with $0\leq |u|\leq u_0$, for all $t\geq t_0$, and for all $\La$ in the range \eqref{eq:range}.
Indeed, if \eqref{eq:33} holds, then, for $\delta_1<u_0$,
\beqs
\int_0^{\delta_1\re^{\ri \pi/4}} \big| F'\left(u;t,\La\right)\big| \big|\rd u\big|\leq \delta_1 C_0,
\eeqs
and then we let $\delta_1(\eps):= \min(\eps/C_0, u_0)$.

Now, from \eqref{eq:3},
\beqs
\diff{\zeta}{u}(u)= 1 + \frac{(1+ \la_c)u - \ln(1+ \la_c \zeta) + \ln(1-\zeta)}{\ln(1+\La) + \ln(1+ \la_c \zeta) - \ln(1-\zeta)}.
\eeqs
For $|\zeta|$ sufficiently small with $\Re\zeta \geq 0$, we have $\Re\left( \ln(1+ \la_c \zeta) - \ln(1-\zeta)\right)\geq 0$, and so
\beqs
\big| \ln(1+\La)+\ln(1+\la_c \zeta) -\ln(1-\zeta)\big|\geq \big|\ln(1+\la_c \zeta) -\ln(1-\zeta)\big|.
\eeqs
Therefore, for $|u|$ sufficiently small,
\beqs
\left|\diff{\zeta}{u}(u)\right|\leq 1 + \frac{\left|(1+ \la_c)u - \ln(1+ \la_c \zeta) + \ln(1-\zeta)\right|}{\left|\ln(1+ \la_c \zeta) - \ln(1-\zeta)\right|}.
\eeqs
By using
\beq\label{eq:log}
\ln(1+ \xi) = \xi - \half \xi^2 + \cO(\xi^3) \quad\tas \xi\tendo,
\eeq
together with the fact that $\zeta= u + \cO(u^2)$ as $u\tendo$, with the $\cO(\cdot)$ independent of $\La$, from Remark \ref{rem:Taylor}, we have
\begin{align*}
\left|\diff{\zeta}{u}(u) \right| = 1 + \frac{
\left| (1+ \la_c) u - (1+ \la_c)\zeta + \cO(\la_c^2 \zeta^2) + \cO(\zeta^2)\right|
}{
\left| (1+ \la_c) \zeta + \cO(\la_c^2 \zeta^2) + \cO(\zeta^2)\right|
}
= 1 + \frac{
\left| (1+ \la_c) \cO(u^2)\right|
}{
\left| (1+ \la_c)u + \cO(u^2)\right|
},
\end{align*}
where both the $\cO(u^2)$s are uniform in $\La$ and $\la_c$ (and hence $t$) since $\la_c \tendo$ as $t\tendi$.
In other words, we have
\beqs
\left|\diff{\zeta}{u}(u)\right| = 1+ \cO(u) \quad\tas u \tendo,
\eeqs
and a similar calculation shows that
\beqs
\left|\diff{^2 \zeta}{u^2}(u)\right| = \cO(1) \quad \tas u \tendo,
\eeqs
where in both cases the $\cO(\cdot)$ is independent of $\La$ and $t$. Using these asymptotics in \eqref{eq:12}, along with $\zeta= u + \cO(u^2)$ as $u\tendo$ and the fact that
$\la_c \tendo$ as $t\tendi$, we can prove \eqref{eq:33} and thus \eqref{eq:10}.

\

\noi 2) For the proof of \eqref{eq:11}, the result will follow if we can show that there exists $t_1>0$, $m_1>0$, and $m_2>0$
(all independent of $t, \La,$ and $u$) and $C_3(\delta_1(\eps))>0$, 
such that
\beq\label{eq:chip}
\big|F'\left(u;t,\La\right)\big| \leq C_3\left(\delta_1(\eps)\right)\, t^{m_1}\left(1+ |u|\right)^{m_2},
\eeq
for all $u$ such that $u= |u|\re^{\ri \pi/4}$ and $|u|\geq \delta_1(\eps)$ and for all $t\geq t_1$.
Indeed, denoting the integral on the left-hand side of \eqref{eq:11} by $I(t,\La)$ and using \eqref{eq:chip}, we have
\beqs
I(t,\La) \leq C_3\left(\delta_1(\eps)\right)\, t^{m_1}\int_{\delta_1(\eps)\re^{\ri \pi/4}}^{\infty \re^{\ri \pi/4}} \re^{-\frac{\la_c t}{4}|u|^2} \left(1+ |u|\right)^{m_2} |\rd u|.
\eeqs
Since $\la_c t\sim t^{\delta}$ as $t\tendi$ (from the definition \eqref{eq:la_c}), we let $p= u t^{\delta/2}$ and then there exists a $C_4>0$, independent of $t$ and $\La$, such that, for all $t\geq t_1$,
\begin{align*}
I(t;\La)&\leq C_3\left(\delta_1(\eps)\right)\, t^{m_1}\int_{\delta_1(\eps)\re^{\ri \pi/4}t^{\delta/2}}^{\infty \re^{\ri \pi/4}}
 \re^{-C_4|p|^2} \left(1+ \frac{|p|}{t^{\delta/2}}\right)^{m_2} \frac{|\rd p|}{t^{\delta/2}},
\\
& \leq C_3\left(\delta_1(\eps)\right)\, t^{\widetilde{m}}
 \int_0^{\infty} \re^{-C_4|p|^2} (1+ |p|)^{m_2} |\rd p|,
\end{align*}
for some $\widetilde{m}>0$ (dependent on $\delta$, $m_1$, and $m_2$) and the result \eqref{eq:11} follows. Therefore, it is sufficient to prove \eqref{eq:chip}.

We now claim that to prove \eqref{eq:chip} it is sufficient to prove that there exist real ${\cC}_1, \cC_2$, $\alpha_1, \alpha_2, n_1, n_2,$ all independent of $t$ and $\La$ but with $\cC_1, \cC_2>0$ and possibly dependent on $\delta_1(\eps)$,
 such that
\beq\label{eq:s3}
\cC_1 t^{\alpha_1} |u|^{n_1} \leq |\zeta| \leq \cC_2 t^{\alpha_2} |u|^{n_2},
\eeq
for all $u= |u|\re^{\ri\pi/4}$ with $|u|\geq \delta_1(\eps)$, for all $t\geq t_1$, and for all $\La$ in the range \eqref{eq:range}.
Indeed, \eqref{eq:chip} follows from \eqref{eq:s3} and (i) the form of $\rd \zeta/\rd u$ \eqref{eq:3} and subsequent form of $\rd^2 \zeta/\rd u^2$, (ii) the fact that $0\leq \ln(1+\La) \leq C(\delta)\ln t$ from \eqref{eq:range}, (iii) the form of $F'(u;t,\La)$ \eqref{eq:12}, and (iv) the fact that $1/2\leq \sigma\leq 1$ from \eqref{eq:g}.
Therefore, it is sufficient to prove \eqref{eq:s3}.

To prove \eqref{eq:s3}, we first note that the implicit definition of $u$ \eqref{eq:2} implies that, given $\delta_1(\eps)$, there exist $\cC_3, \cC_4>0$, independent of $t$ and $\La$, but dependent on $\delta_1(\eps)$, such that
\beq\label{eq:tricky1}
\cC_3 \la_c |u|^2 \leq \big|f_1\left(\zeta;t,\La\right)\big| \leq \cC_4 \la_c \Big( |u|^2  + \ln t |u|\Big),
\eeq
for all $u= |u|\re^{\ri\pi/4}$ with $|u|\geq \delta_1(\eps)$ and for all $t\geq 1 $.

From the definition of $f_1(\zeta;t,\La)$ \eqref{eq:f1}, we have that given $\zeta_0>0$ and $t_3>0$, there exist $\cC_5, \cC_6>0$, independent of $t$ and $\La$, but dependent on $\zeta_0$, such that
\beq\label{eq:tricky2}
\cC_5 \la_c |\zeta| \Big[ \ln (1+ \La) + 1\Big] \leq \big|f_1\left(\zeta;t,\La\right)\big| \leq \cC_6 \la_c |\zeta| \Big[ \ln(1+\La) + 1 + (1+ \la_c) |\zeta|\Big],
\eeq
for all $\zeta=|\zeta|\re^{\ri \phi}$ with $|\zeta|\geq \zeta_0$ and $\phi$ satisfying \eqref{eq:phi1} and \eqref{eq:phi2}, for all $t\geq t_3$ and for all $\La$ in the range \eqref{eq:range}.

Therefore, choosing $\zeta_0$ to depend on $\delta_1(\eps)$ in such a way that $|\zeta|\geq \zeta_0$ when $u=|u|\re^{\ri \pi/4}$ with $|u|\geq \delta_1(\eps)$, and then combining \eqref{eq:tricky1} and \eqref{eq:tricky2}, we have the required result \eqref{eq:s3}.
\epf

\

\noi With Lemma \ref{lem:3.7} in hand, we can now prove Lemma \ref{lem:error}:

\

\bpf[Proof of Lemma \ref{lem:error}]
Let
\beqs
I_1(t,\La): = \int_0^{\delta_1(\eps)\re^{\ri \pi/4}}F'\left(u;t,\La\right) \Phi\left(u;t,\La\right) \rd u \quad\tand\quad I_2(t,\La):=\intFokas{\delta_1(\eps)\re^{\ri \pi/4}}F'\left(u;t,\La\right) \Phi\left(u;t,\La\right)  \rd u,
\eeqs
so that
\beqs
\intFokas{0}F'\left(u;t,\La\right)\,\Phi\left(u;t,\La\right) \rd u = I_1(t,\La)+ I_2(t,\La).
\eeqs
By Part 1 of Lemma \ref{lem:3.7} and Lemma \ref{lem:3.4}, given $t_0>0$ and $\eps>0$, we have
\beq\label{eq:final1}
|I_1(t,\La)| \leq \eps\,C_1\big| \Phi(0;t,\La) \big|
\eeq
for all $t\geq t_0$ and for all $\La$ in the range \eqref{eq:range}.

Now, the asymptotics \eqref{eq:8a} imply that there exists $t_1>0$ and a $C(\delta(\eps))$ such that
\beqs
\big|\Phi(u;t,\La)\big| \leq C(\delta(\eps)) \exp\left(- \frac{\la_c t}{2} |u|^2\right),
\eeqs
for all $t\geq t_1$, for all $u=|u|\re^{\ri \pi/4}$ with $|u|\geq \delta_1(\eps)$, and for all $\La$ in the range \eqref{eq:range}. Using this bound in the definition of $I_2$, we have
\begin{align}\nonumber
|I_2|
&\leq C(\delta(\eps))\int_{\delta_1(\eps)\re^{\ri \pi/4}}^{\infty \re^{\ri \pi/4}} \big|F'(u)\big| \exp\left(-\frac{\la_c t}{2}|u|^2\right)\big|\rd u\big|,\\ \nonumber
&\leq C(\delta(\eps)) \exp\left(-\frac{\la_c t}{4}|\delta_1(\eps)|^2\right) \int_{\delta_1(\eps)\re^{\ri \pi/4}}^{\infty \re^{\ri \pi/4}} \big|F'(u)\big|\exp\left(-\frac{\la_c t}{4}|u|^2\right)\big|\rd u\big|,\\ \label{eq:final2a}
& \leq C(\delta(\eps))  \exp\left(-\frac{\la_c t}{4}|\delta_1(\eps)|^2\right)C_2\left(\delta,\delta_1(\eps)\right) \,t^m \quad\text{ by Part 2 of Lemma \ref{lem:3.7}}.
\end{align}
The asymptotics of $\Phi(0;t,\La)$, \eqref{eq:7a} and \eqref{eq:7b}, and the range of $\La$ \eqref{eq:range} imply that there exist $C_3, C_4>0$ such that
\beq\label{eq:s2}
C_3 \frac{1}{\la_c t} \frac{1}{\ln t} \leq \big|\Phi(0;t,\La)\big|\leq C_4 \sqrt{\frac{1}{\la_c t}},
\eeq
for all $t\geq t_1$ and for all $\La$ in the range \eqref{eq:range}.

Now, from \eqref{eq:final2a},
\beqs
|I_2(t,\La)| \leq \eps \big|\Phi(0;t,\La)\big|\left(
\frac{1}{\eps}\frac{1}{\big|\Phi(0;t,\La)\big|}  C\left(\delta(\eps)\right)  \exp\left(-\frac{\la_c t}{4}|\delta_1(\eps)|^2\right)C_2\left(\delta,\delta_1(\eps)\right) t^m\right),
\eeqs
and then using \eqref{eq:s2} we see that
\beqs
|I_2| \leq \eps \big|\Phi(0;t,\La)\big|\left(
\frac{1}{\eps}\frac{1}{C_3} C_2\left(\delta,\delta_1(\eps)\right) C\left(\delta(\eps)\right) (\la_c t)^{}(\ln t) t^{m}  \exp\left(-\frac{\la_c t}{4}|\delta_1(\eps)|^2\right)\right),
\eeqs
for all $t\geq t_1$.
Recalling that $\la_c t\sim t^{\delta}$ as $t\tendi$, we see that there exists a $t_2>0$ such that
\beq\label{eq:final2b}
|I_2(t;\La) | \leq \eps \big|\Phi\left(0;t,\La\right)\big| \quad\tfa t\geq t_2.
\eeq
Combining \eqref{eq:final1} and \eqref{eq:final2b} we obtain the result \eqref{eq:9a}.
\epf

\section{Proof of Theorem \ref{JB1+2-theorem}}
\label{sec:Arran}

\subsection{Summary of the method}

The main idea is to split the integral $J_B$ into two parts: $J_{B1}$, an integral along a finite contour that is both real and (when $t$ is large) vanishingly small, and $J_{B2}$, an integral along an infinite contour that is controllably ``far" from the endpoint (and hence from the stationary point). The large-$t$ asymptotics of $J_{B2}$ can be computed to all orders, while those of $J_{B1}$ can be computed at least to first order. This will be sufficient to find the asymptotics of $J_B$ to all orders, since the error term in the asymptotic expression for $J_{B1}$ can be made arbitrarily small compared to the asymptotics of $J_{B2}$ by making an appropriate choice of the splitting point for the integrals.

The method can be summarised as follows:

\bit
\item Step 1: split the contour of integration into two parts, one small and one infinitely long.

\item Step 2: estimate the infinite-contour integral $J_{B2}$ using an integration by parts argument.

\item Step 3: estimate the small-contour integral $J_{B1}$ by using a truncated Taylor series (i.e. a local expansion).

\item Step 4: choose the splitting point for the contour so as to appropriately bound the remainder terms from both the previous steps.
\eit

\subsection{Step 1: Splitting the integral}

We define $J_{B1}$ and $J_{B2}$ as follows:
\begin{align}
\label{JB1-defn}
J_{B1}(t;\la):=\int_{1-t^{\delta-1}}^{1-k}(1-z)^{-1/2}\re^{\ri tF(z;\lambda)}\,\mathrm{d}z \quad\tand\quad
J_{B2}(t;\la):=\int_{1-k}^{\infty \re^{\ri \phi}}(1-z)^{-1/2}\re^{\ri tF(z;\lambda)}\,\mathrm{d}z,
\end{align}
where $k=k(t,\delta)$ is chosen so that
\beq\label{eq:k}
0<k<t^{\delta-1}.
\eeq
We will fix $k$ as a specific function of $t$ and $\delta$ in Step 4. By Cauchy's theorem, we have
 \beq\label{eq:mark}
 J_B(t;\la)=J_{B1}(t;\la)+J_{B2}(t;\la).
 \eeq

\subsection{Step 2: The asymptotics of $J_{B2}$}
We now prove two lemmas about the behaviour of the phase function $F(z;\la)$.

\ble
\label{JB2-lemma1}
When $z$ is on the contour
\beq\label{eq:contour}
\{z: z=1-k+R\re^{\ri \phi}, \quad0\leq R<\infty\},
\eeq
we have
\begin{equation}
\label{dFdz-estimate}
\bigg|\frac{\partial F}{\partial z}\bigg|>\min\left(\frac{\pi}{2}-\phi,\ln\left(\frac{t^{\delta-1}}{k}\right)\right),
\end{equation}
and for sufficiently large $R$, $\big|\partial F/\partial z\big|> \pi/2-\phi$.
\ele

\begin{proof}
We split $\partial F/\partial z$ into its real and imaginary parts, as follows: 
\beq\label{eq:dFdz}
\frac{\partial F}{\partial z}=\ln\left(\frac{z}{1-z}\right)+\ln\lambda=\ln\Big|\frac{z}{1-z}\Big|+\ln\lambda+\ri \arg\left(\frac{z}{1-z}\right).
\eeq
As $z$ moves along the contour \eqref{eq:contour} with $R$ increasing, $\arg(z)$ is strictly increasing from $0$ towards $\phi$, and $\arg(1-z)$ is strictly decreasing from $0$ towards $\phi-\pi$, so the imaginary part $\arg\left(\frac{z}{1-z}\right)$ increases monotonically from $0$ towards $\pi$.

\begin{figure}
  \centering 
 {
    \includegraphics[width=0.8\textwidth]{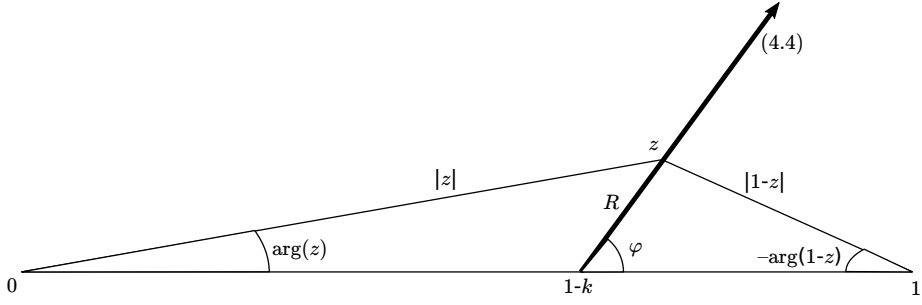}
  }
  \caption{The geometry of the integration contour \eqref{eq:contour}, for small $R$}\label{fig:1}
\end{figure}

For very small $R>0$, we can see that $|z|$ is increasing and $|1-z|$ is decreasing (see Figure \ref{fig:1}), so $\big|\frac{z}{1-z}\big|$ is increasing from its initial value of $\frac{1-k}{k}\sim k^{-1}\gg1$ ($t$ is large and $\delta>0$ is small, and thus $k<t^{\delta-1}$ is small). But for very large $R$, it is clear that $\big|\frac{z}{1-z}\big|\sim1$. So at some point the function $\big|\frac{z}{1-z}\big|$ must stop increasing and start decreasing, i.e. its derivative must change sign. We now find out where this point is by considering the simpler function $\big|\frac{z}{1-z}\big|^2$.

Using the parametrisation in \eqref{eq:contour}, we write all the relevant functions in terms of $R$ and not $z$:
\begin{align}\nonumber
&1-z=k-R\cos\phi-\ri R\sin\phi; \\ 
\label{eq:square} |z|^2=(1-k)^2+2(1-k)&R\cos\phi+R^2; \qquad |1-z|^2=k^2-2kR\cos\phi+R^2.
\end{align}
Now we can compute the value of $R$ at which the derivative is zero:
\begin{align*}
\frac{\partial}{\partial R}\bigg(\Big|\frac{z}{1-z}\Big|^2\bigg)=0&\Leftrightarrow\big|1-z\big|^2\frac{\partial}{\partial R}\left(|z|^2\right)-|z|^2\frac{\partial}{\partial R}\left(\big|1-z\big|^2\right)=0 \\
\begin{split}
&\Leftrightarrow\left(k^2-2kR\cos\phi+R^2\right)\left(2(1-k)\cos\phi+2R\right) \\
&\hspace{2cm}-\left((1-k)^2+2(1-k)R\cos\phi+R^2\right)\left(-2k\cos\phi+2R\right)=0
\end{split} 
\begin{split}
\end{split} \\
&\Leftrightarrow\big[-\cos\phi\big]R^2+\big[2k-1\big]R+\big[k(1-k)\cos\phi\big]=0 \\
&\Leftrightarrow R=\frac{1-2k\pm\sqrt{1-4k\sin^2\phi+4k^2\sin^2\phi}}{-2\cos\phi}.
\end{align*}
Since $k$ is small, we find the following first-order approximation for the critical value of $R$: \[R=\frac{1-2k\pm(1-4k\sin^2\phi+4k^2\sin^2\phi)^{1/2}}{-2\cos\phi}\approx\frac{1-2k\pm(1-2k\sin^2\phi)}{-2\cos\phi}.\] Taking the positive sign gives a negative value of $R$, so we take the negative sign and obtain \[R\approx\frac{1-2k-1+2k\sin^2\phi}{-2\cos\phi}=k\cos\phi.\]

Thus, we have proved that the function $\big|\frac{z}{1-z}\big|$ has a single stationary point for $R\geq0$, namely a maximum at a value of $R$ somewhere close to $k\cos\phi$. We now find the maximal value of $\big|\frac{z}{1-z}\big|$ by evaluating this function at $R=k\cos\phi$. 
With this value of $R$, using the above formulae, we have:
\begin{align*}
|z|^2&=(1-k)^2+2(1-k)k\cos^2\phi+k^2\cos^2\phi=1-2k\sin^2\phi+k^2\sin^2\phi, \\
\big|1-z\big|^2&=k^2-2k^2\cos^2\phi+k^2\cos^2\phi=k^2\sin^2\phi, \\
\Big|\frac{z}{1-z}\Big|^2&=\frac{1}{k^2\sin^2\phi}-\frac{2}{k}+1\sim\frac{1}{k^2\sin^2\phi},
\end{align*}
and thus
\beqs
\Big|\frac{z}{1-z}\Big|\sim\frac{1}{k\sin\phi}>k^{-1}.
\eeqs

We are now in a position to estimate $\partial F/\partial z$, by bounding either its real part or its imaginary part according to the value of $R$. We split into two cases as follows.

\textbf{Case 1:} $\boldsymbol{R\geq k\cos\phi.}$ Here we consider the imaginary part, namely $\arg\left(\frac{z}{1-z}\right)$, which we know is monotonically increasing and therefore bounded below by its value at $R=k\cos\phi$. We can see from Figure \ref{fig:2} that $R=k\cos\phi$ gives $\arg(1-z)=-(\frac{\pi}{2}-\phi)$ and therefore $\arg\big(\frac{z}{1-z}\big)>\pi/2-\phi$. 
Thus, in this case we have:
\begin{equation}
\label{dFdz-estimate1}
\bigg|\frac{\partial F}{\partial z}\bigg|>\frac{\pi}{2}-\phi.
\end{equation}

\begin{figure}
  \centering
  {
    \includegraphics[width=0.8\textwidth]{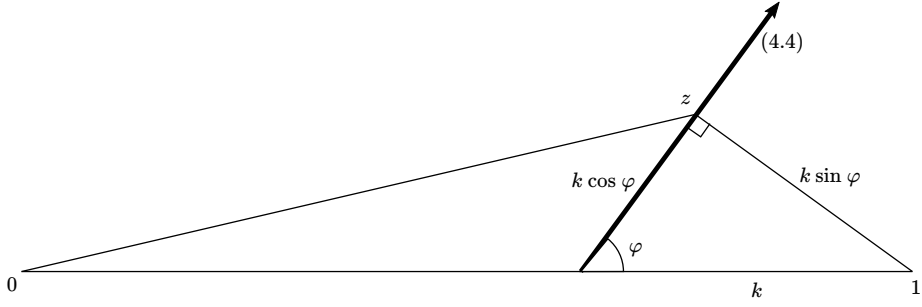}
  }
  \caption{The geometry of the integration contour in \eqref{eq:contour} in the case when $R=k\cos\phi$}\label{fig:2}
  \end{figure}

\textbf{Case 2:} $\boldsymbol{R\leq k\cos\phi.}$ Here we consider the real part, namely $\ln\big|\frac{z}{1-z}\big|+\ln\lambda$. We know this quantity is monotonically increasing up to \textit{approximately} $R=k\cos\phi$, and that it is greater than its initial value \textit{at} $R=k\cos\phi$, so it must be bounded below by its initial value, i.e. by $\ln\left(\frac{(1-k)\la}{k}\right)$. Using the lower bound on $\la$ in \eqref{eq:3old}, as well as the assumption $k<t^{\delta-1}$ from \eqref{eq:k}, we then have
\begin{align}
\nonumber
\bigg|\frac{\partial F}{\partial z}\bigg|&\geq\Big|\ln\left(\frac{1-k}{k}\right)+\ln\lambda\Big|>\Big|\ln\left(\frac{1}{k}-1\right)-\ln\left(t^{1-\delta}-1\right)\Big| \\
\label{dFdz-estimate2}
&=\ln\left(\frac{t^{\delta-1}}{k}\right)+\ln(1-k)+\ln\left(1-t^{\delta-1}\right)>\ln\left(\frac{t^{\delta-1}}{k}\right).
\end{align}
Putting together the estimates (\ref{dFdz-estimate1}) and (\ref{dFdz-estimate2}), we have the desired result (\ref{dFdz-estimate}).
\end{proof}

\

Motivated by the results of Lemma \ref{JB2-lemma1}, we introduce the following notation, which will make some of the later calculations simpler.

\begin{definition}
Let
\begin{equation}
\label{d-defn}
D=D(\lambda):=\frac{\partial F}{\partial z}(1-k;\la)=\ln\left(\frac{(1-k)\lambda}{k}\right),
\end{equation}
and let
\begin{equation*}
D_-:=\ln\left(\frac{t^{\delta-1}}{k}\right).
\end{equation*}
\end{definition}

\begin{remark}
We saw in (\ref{dFdz-estimate2}) that $D$ always has $D_-$ as a lower bound, but it can only be close to this value if $\lambda$ is close to its critical value $\lambda_c$. In general $D$ might be as large as $\cO(\ln t)$.
\end{remark}

As a corollary of Lemma \ref{JB2-lemma1}, we can identify a particular situation where $D_-$ is a lower bound for $|\partial F/\partial z|$ on the whole of the contour \eqref{eq:contour} (i.e.~not just at the endpoint $z=1-k$).

\begin{corollary}\label{cor:phase}
Suppose that $k$ is close enough to $t^{\delta-1}$ that $D_-\ll1$,
and define $\phi$ as follows:
\beq\label{eq:phi_new}
\phi=\frac{\pi}{4} \text{ when } \ln \la \geq 0, \quad \tand\quad \phi =\half \arctan\left(\frac{\pi}{|\ln\la|}\right) \text{ when } \ln\la<0;
\eeq
note that these choices satisfy the required conditions \eqref{eq:phi1} and \eqref{eq:phi2} on $\phi$.

Then
\beqs
\left|\pdiff{F}{z}(z;\la)\right| \geq D_-
\eeqs
for all $z$ on the contour \eqref{eq:contour}.
\end{corollary}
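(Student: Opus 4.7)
The approach is a direct application of Lemma \ref{JB2-lemma1}, combined with a simple case analysis of the specific choice of $\phi$ in \eqref{eq:phi_new}. The plan is to show that, under the hypothesis $D_- \ll 1$, the quantity $\pi/2 - \phi$ is bounded below by something larger than $D_-$, so that the minimum appearing in the bound \eqref{dFdz-estimate} is always $D_-$ itself.

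First, I would invoke Lemma \ref{JB2-lemma1} to obtain
\begin{equation*}
\left|\frac{\partial F}{\partial z}(z;\la)\right| > \min\left(\frac{\pi}{2}-\phi,\; D_-\right)
\end{equation*}
for every $z$ on the contour \eqref{eq:contour}. It therefore suffices to verify that $\pi/2 - \phi \geq D_-$ under the hypotheses of the corollary.

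Next, I would examine the two cases in \eqref{eq:phi_new}. When $\ln\la \geq 0$, the definition gives $\phi = \pi/4$ and hence $\pi/2 - \phi = \pi/4$. When $\ln\la < 0$, we have $\phi = \tfrac{1}{2}\arctan(\pi/|\ln\la|)$, and since $\arctan$ takes values strictly less than $\pi/2$, this yields $\phi < \pi/4$, so $\pi/2 - \phi > \pi/4$. In either case, $\pi/2 - \phi \geq \pi/4$. Finally, since $D_- \ll 1$ by hypothesis, we certainly have $D_- < \pi/4 \leq \pi/2 - \phi$, so the minimum is achieved by $D_-$ and the result follows.

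There is no substantive obstacle to this proof: the content of the corollary lies entirely in Lemma \ref{JB2-lemma1}, and the role of the prescription \eqref{eq:phi_new} is simply to keep $\pi/2 - \phi$ uniformly bounded away from zero (indeed, above $\pi/4$) as $\la$ varies over the admissible range. The hypothesis $D_- \ll 1$ then automatically selects $D_-$ as the smaller of the two terms in the minimum.
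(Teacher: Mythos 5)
Your proof is correct and follows exactly the same approach as the paper: invoke Lemma \ref{JB2-lemma1}, observe that the definition \eqref{eq:phi_new} forces $\phi\leq\pi/4$ (hence $\pi/2-\phi\geq\pi/4$), and conclude that under $D_-\ll 1$ the minimum in \eqref{dFdz-estimate} is $D_-$. The only difference is that you spell out the two cases of \eqref{eq:phi_new} explicitly, whereas the paper states $\phi\leq\pi/4$ ``regardless of $\lambda$'' without elaboration.
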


\begin{proof}
The definition (\ref{eq:phi_new}) implies that $\phi\leq\pi/4$ regardless of $\lambda$, so $\pi/2-\phi\geq\pi/4$. We also have $D_-\ll1$, so the result of Lemma \ref{JB2-lemma1} becomes
\begin{equation*}
\bigg|\frac{\partial F}{\partial z}(z,\lambda)\bigg|>\min\left(\frac{\pi}{2}-\phi,D_-\right)=D_-,
\end{equation*}
as required.
\end{proof}

\ble
\label{JB2-lemma2}
When $z$ is on the contour \eqref{eq:contour}, the function $F(z;\lambda)$ always has non-negative imaginary part.
\ele

\begin{proof}
Clearly $\mathrm{Im}(F)=0$ when $R=0$, since then $z$ and $\lambda$ are both real.

When $R>0$ is very small, we can estimate $F$ as follows:
\begin{align*}
F(z;\lambda)&=(k-R\re^{\ri \phi})\ln(k-R\re^{\ri \phi})+(1-k+R\re^{\ri \phi})\ln(1-k+R\re^{\ri \phi})+(1-k+R\re^{\ri \phi})\ln\lambda, \\
\begin{split}
&=(k-R\re^{\ri \phi})\left(\ln k+\ln\left(1-\frac{R\re^{\ri \phi}}{k}\right)\right) \\
&\hspace{2cm}+(1-k+R\re^{\ri \phi})\left(\ln(1-k)+\ln\left(1+\frac{R\re^{\ri \phi}}{1-k}\right)\right)+(1-k+R\re^{\ri \phi})\ln\lambda,
\end{split} \\
&\sim(k-R\re^{\ri \phi})\left(\ln k-\frac{R\re^{\ri \phi}}{k}\right)+(1-k+R\re^{\ri \phi})\left(\ln(1-k)+\frac{R\re^{\ri \phi}}{1-k}\right)+(1-k+R\re^{\ri \phi})\ln\lambda, \\
&\sim\big[k\ln k+(1-k)\ln(1-k)+(1-k)\ln\lambda\big]+R\re^{\ri \phi}\big[-\ln k-1+\ln(1-k)+1+\ln\lambda\big].
\end{align*}
Therefore,
\begin{equation*}
\mathrm{Im}(F)\sim R\sin\phi\Big[-\ln k+\ln(1-k)+\ln\lambda\Big]=R\sin\phi\ln\left(\lambda\left(\frac{1}{k}-1\right)\right).
\end{equation*}
Thus, since $k<t^{\delta-1}$ and $\lambda\geq\frac{t^{\delta-1}}{1-t^{\delta-1}}$, we have $\mathrm{Im}(F)>0$ for $R>0$ small.

Clearly, $F(z;\lambda)$ is an analytic function of $z$ for $\mathrm{Im}(z)>0$, so 
\begin{align*}
\frac{\partial}{\partial R}\Big(\mathrm{Im}\,F\left(1-k+R\re^{\ri \phi},\lambda\right)\Big)&=\frac{\partial}{\partial z}\Big(\mathrm{Im}\,F(z;\lambda)\Big)\bigg|_{z=1-k+R\re^{\ri \phi}}=\mathrm{Im}\left(\frac{\partial F}{\partial z}\right)\bigg|_{z=1-k+R\re^{\ri \phi}} \\
&=\Big(\arg(z)-\arg(1-z)\Big)\bigg|_{z=1-k+R\re^{\ri \phi}}>0.
\end{align*}
Thus, $\mathrm{Im}(F)$ is strictly increasing along the contour, which means it must be positive for all $R>0$, as required.
\end{proof}

\

We now prove the following lemma which allows us to simplify the terms arising from repeated integration by parts.

\ble
\label{JB2-lemma3}
For any $N\in\mathbb{N}$,
\begin{equation}
\label{JB2-IbP-simplification}
\bigg(\frac{\partial}{\partial z}\cdot\frac{-1}{\ri t\frac{\partial F}{\partial z}}\bigg)^N\left((1-z)^{-1/2}\right)=\frac{(1-z)^{-(2N+1)/2}}{(-it)^N\left(\partial F/\partial z\right)^{N}}\sum_{m,n=0}^NA_{mn}z^{-m}\left(\frac{\partial F}{\partial z}\right)^{-n},
\end{equation}
where $F$ is defined by \eqref{eq:2old} and the $A_{mn}$ are dyadic rationals satisfying $|A_{mn}|<(3N)!$ for all $m,n$ and $A_{mN}=0$ for all $m<N$, $A_{NN}=(-1)^N(2N-1)!!:=(2N-1)(2N-3)\dots(5)(3)(1)$.
\ele

\begin{proof}
Firstly, the expression for $\partial F/\partial z$, \eqref{eq:dFdz}, implies that
\[\frac{\partial^2F}{\partial z^2}=\frac{1}{z}+\frac{1}{1-z}=\frac{1}{z(1-z)}.\] So for $N=1$ we have
\begin{align*}
\frac{\partial}{\partial z}\bigg(\frac{(1-z)^{-1/2}}{-it\frac{\partial F}{\partial z}}\bigg)
=\frac{(1-z)^{-3/2}\left(\frac{1}{2}\cdot\frac{\partial F}{\partial z}+(z-1)\frac{\partial^2F}{\partial z^2}\right)}{-it\left(\partial F/\partial z\right)^2} =\frac{(1-z)^{-3/2}}{-it\left(\partial F/\partial z\right)}\bigg[\frac{1}{2}-z^{-1}\left(\frac{\partial F}{\partial z}\right)^{-1}\bigg],
\end{align*}
which is in the required form. For $N=2$, similar calculations give that
\begin{align*}
\frac{\partial}{\partial z}\Bigg(\frac{(1-z)^{-3/2}\left(\frac{1}{2}\cdot\frac{\partial F}{\partial z}-\frac{1}{z}\right)}{-t^2\left(\partial F/\partial z\right)^3}\Bigg) 
=\frac{(1-z)^{-5/2}}{-t^2\left(\partial F/\partial z\right)^2}\bigg[\frac{3}{4}-\left(\frac{7}{2}z^{-1}-z^{-2}\right)\left(\frac{\partial F}{\partial z}\right)^{-1}+3z^{-2}\left(\frac{\partial F}{\partial z}\right)^{-2}\bigg],
\end{align*}
which is again in the required form.

For the general case, we proceed by induction. Assuming the equation (\ref{JB2-IbP-simplification}) is valid with $N$ replaced by $N-1$, and differentiating as before, we find that the LHS of (\ref{JB2-IbP-simplification}) is:
\begin{align*}
&\bigg(\frac{\partial}{\partial z}\cdot\frac{-1}{\ri t\frac{\partial F}{\partial z}}\bigg)^N\left((1-z)^{-1/2}\right) =\frac{\partial}{\partial z}\Bigg[\frac{(1-z)^{-(2N-1)/2}}{(-it)^N\left(\partial F/\partial z\right)^{N}}\sum_{m,n=0}^{N-1}A_{mn}z^{-m}\left(\frac{\partial F}{\partial z}\right)^{-n}\Bigg],\\
\,&=\sum_{m,n=0}^{N-1}\frac{A_{mn}(1-z)^{-(2N+1)/2}}{(-it)^N\left(\partial F/\partial z\right)^N}\bigg[\left(N+m-\frac{1}{2}\right)z^{-m}\left(\frac{\partial F}{\partial z}\right)^{-n}\\
&\hspace{5.5cm}-
mz^{-m-1}\left(\frac{\partial F}{\partial z}\right)^{-n}-(N+n)z^{-m-1}\left(\frac{\partial F}{\partial z}\right)^{-n-1}\bigg],
\end{align*}
which can be rearranged to an expression in the required form. Note that the only term with $n=N$ is the one with $m=n=N$, by the inductive hypothesis.
\end{proof}

Given the three lemmas above, we are now in a position to compute the large-$t$ asymptotics of $J_{B2}$.

\ble[Large-$t$ asymptotic expansion of $J_{B2}$]
\label{JB2-theorem}
We have
\begin{equation}
\label{JB2-asymptotics}
J_{B2}(t;\la)=\sum_{j=1}^{N-1}T_j(t;\la)+R_N(t;\la),
\end{equation}
where the terms $T_j$ are defined by 
\beq\label{eq:Tj}
T_j(t;\la)=-\bigg(\frac{-1}{\ri t\frac{\partial F}{\partial z}}\cdot\frac{\partial}{\partial z}\bigg)^j\bigg(\frac{(1-z)^{-1/2}}{\ri t\frac{\partial F}{\partial z}}\bigg)\re^{\ri tF(z;\lambda)}\Bigg|_{z=1-k}
\eeq
and the remainder term $R_N$ is defined by
\beqs
R_N(t;\la)=\int_{1-k}^{\infty \re^{\ri \phi}}\bigg(\frac{\partial}{\partial z}\cdot\frac{-1}{\ri t\frac{\partial F}{\partial z}}\bigg)^N\left((1-z)^{-1/2}\right)\re^{\ri tF(z;\lambda)}\,\mathrm{d}z. 
\eeqs
If $\phi$ is defined by \eqref{eq:phi_new} and $k$ satisfies
\begin{equation}
\label{JB2-k-assumption}
(kt)^{\epsilon-\frac{m}{2m+1}}\ll D_-\ll1
\end{equation}
for some $m\in\mathbb{N}$ and $\epsilon>0$, then:
\begin{align}
T_j(t;\la)&=\cO\left((2j-1)!! t^{-j-1}k^{-(2j+1)/2}D_-^{-2j-1}\right); \label{Tj-estimate} \\
R_N(t;\la)&=\cO\left((2N-1)!! t^{-N}(\ln t)^{(2N+1)/2}k^{-(2N-1)/2}D_-^{-2N}\right);\label{RN-estimate}
\end{align}
and thus (\ref{JB2-asymptotics}) provides a valid large-$t$ asymptotic expansion.
\ele

\begin{proof}
We apply integration by parts $N$ times to the definition (\ref{JB1-defn}) of $J_{B2}$ to obtain
\begin{multline}
\label{JB2-IbP}
J_{B2}(t;\la)=\sum_{j=0}^{N-1}\Bigg[\bigg(\frac{-1}{\ri t\frac{\partial F}{\partial z}}\cdot\frac{\partial}{\partial z}\bigg)^j\bigg(\frac{(1-z)^{-1/2}}{\ri t\frac{\partial F}{\partial z}}\bigg)\re^{\ri tF(z;\lambda)}\Bigg]_{1-k}^{\infty \re^{\ri \phi}} \\ +\int_{1-k}^{\infty \re^{\ri \phi}}\bigg(\frac{\partial}{\partial z}\cdot\frac{-1}{\ri t\frac{\partial F}{\partial z}}\bigg)^N\left((1-z)^{-1/2}\right)\re^{\ri tF(z;\lambda)}\,\mathrm{d}z,
\end{multline}
for any $N\in \mathbb{N}$. By Lemmas \ref{JB2-lemma1} and \ref{JB2-lemma2} applied to the series expression given by Lemma \ref{JB2-lemma3}, the $\infty \re^{\ri \phi}$ parts of the boundary terms contribute nothing, and thus \eqref{JB2-IbP} becomes \eqref{JB2-asymptotics}.

We now concentrate on proving the bounds \eqref{Tj-estimate} and \eqref{RN-estimate}.
By Lemma \ref{JB2-lemma2}, $\re^{\ri tF}$ is bounded above by $1$. 
Using this fact, along with Corollary \ref{cor:phase} and Lemma \ref{JB2-lemma3}, we find:
\begin{align}
\nonumber T_j(t;\la)&=\Bigg[\frac{-(1-z)^{-(2j+1)/2}}{(-it)^{j+1}\left(\partial F/\partial z\right)^{j+1}}\sum_{m,n=0}^j\Big[A_{mn}z^{-m}\left(\frac{\partial F}{\partial z}\right)^{-n}\Big]\re^{\ri tF(z;\lambda)}\Bigg]_{z=1-k}, \\
\nonumber
&=\frac{-k^{-(2j+1)/2}}{(-it)^{j+1}D^{j+1}}\sum_{m,n=0}^j\Big[A_{mn}(1-k)^{-m}D^{-n}\Big]\re^{\ri tF(1-k;\lambda)}, \\
\nonumber &=
\cO\left((2j-1)!! t^{-j-1}k^{-(2j+1)/2}\sum_{n=0}^jD_-^{-j-n-1}\right),
\end{align}
which gives the required expression (\ref{Tj-estimate}), since by (\ref{JB2-k-assumption}) we are assuming that $D_-\ll1$.

Using Lemma \ref{JB2-lemma3} again, we have
\beqs
R_N(t;\la)
=\int_{1-k}^{\infty \re^{\ri \phi}}
\Bigg[\frac{(1-z)^{-(2N+1)/2}}{(-it)^N\left(\partial F/\partial z\right)^{N}}\sum_{m,n=0}^NA_{mn}z^{-m}\left(\frac{\partial F}{\partial z}\right)^{-n}\Bigg]\re^{\ri t F(z;\la)}\,\mathrm{d}z. 
\eeqs
From the definition \eqref{eq:contour} of the contour of integration, we have $|z|\geq 1-k$, and then, since $k=o(1)$ as $t\tendi$ (from \eqref{eq:k}), we have $|z|\geq 1/2$, say, for $t$ sufficiently large. Using this fact, along with Corollary \ref{cor:phase}, Lemma \ref{JB2-lemma2}, and the estimates from Lemma \ref{JB2-lemma3}, we have:
\begin{align*}
R_N(t;\la)&=\cO\Bigg(\int_{1-k}^{\infty \re^{\ri \phi}}\frac{(1-z)^{-(2N+1)/2}}{t^N}\sum_{m,n=0}^N\big|A_{mn}\big|\bigg|\frac{\partial F}{\partial z}\bigg|^{-N-n}\,\mathrm{d}z\Bigg), \\
&=\cO\Bigg(\frac{(2N-1)!!}{t^N}D_-^{-2N}\int_{1-k}^{\infty \re^{\ri \phi}}\Big|(1-z)^{-(2N+1)/2}\Big|\,\mathrm{d}z\Bigg).
\end{align*}
Here again we have implicitly used the assumption that $D_-\ll1$ from (\ref{JB2-k-assumption}). Now, from \eqref{eq:square},
\beqs
|1-z|^2 = (k-R)^2\cos\phi + (k^2 +R^2)(1-\cos\phi) \geq \frac{(k+R)^2}{2}(1-\cos\phi),
\eeqs
where we have used the inequality $(k+R)^2 \leq 2(k^2+ R^2)$. Therefore,
\beqs
\frac{1}{|1-z|^{(2N+1)/2}} \leq \left(\frac{2}{1-\cos\phi}\right)^{(2N+1)/4} \frac{1}{(k+R)^{(2N+1)/2}}.
\eeqs

For $\ln\la\geq0$, the first equation in \eqref{eq:phi_new} implies that $(1-\cos\phi)^{-1} =\cO(1)$. For $\ln\la <0$, the second equation in \eqref{eq:phi_new} implies that 
\begin{align*}
\tan2\phi=\frac{\pi}{|\ln\la|},\quad\cos2\phi&=\left(1+\tan^22\phi\right)^{-1/2}=\left(1+\frac{\pi^2}{|\ln\la|^2}\right)^{-1/2},
\end{align*}
and
\begin{align}\label{eq:CE1}
\left(1-\cos\phi\right)^{-1}&=\left(1-\sqrt{\half\left(1+\left(1+\frac{\pi^2}{|\ln\la|^2}\right)^{-1/2}\right)}\;\right)^{-1}.
\end{align}
Therefore, as $|\ln\la|$ increases, $\tan2\phi$ decreases, $\cos2\phi$ increases, and $\left(1-\cos\phi\right)^{-1}$ increases. Thus, the maximal value of $\left(1-\cos\phi\right)^{-1}$ is achieved when $\la$ is minimal, i.e.~when $\la= (t^{1-\delta}-1)^{-1}$. Substituting this into \eqref{eq:CE1} we find that
\begin{align*}
\left(1-\cos\phi\right)^{-1}\sim\frac{8|\ln\la|^2}{\pi^2}=\cO\big((\ln t)^2\big).
\end{align*}
Therefore, in general we have \[|1-z|^{-(2N+1)/2}=\cO\left((\ln t)^{(2N+1)/2}(k+R)^{-(2N+1)/2}\right),\]
and then
\begin{align*}
R_N(t;\la)&=\cO\Bigg(\frac{(2N-1)!!}{t^N}D_-^{-2N}\int_{0}^{\infty}(\ln t)^{(2N+1)/2}(k+R)^{-(2N+1)/2}\,\mathrm{d}R\Bigg), \\
&=\cO\Bigg((2N-1)!! t^{-N}(\ln t)^{(2N+1)/2}D_-^{-2N}\Big[(k+R)^{-(2N-1)/2}\Big]_{R=0}^{\infty}\Bigg), \\
&=\cO\left((2N-1)!! t^{-N}(\ln t)^{(2N+1)/2}D_-^{-2N}k^{-(2N-1)/2}\right),
\end{align*}
as required. 

Finally, it remains to check that (\ref{JB2-asymptotics}) with the estimates (\ref{Tj-estimate}) and (\ref{RN-estimate}) actually gives a valid asymptotic formula, i.e. that each term in the series is smaller than the next term for sufficiently large $t$. From (\ref{Tj-estimate}) we see that the estimate for $T_{j+1}$ is smaller than the one for $T_j$ if and only if \[t^{-1}k^{-1}D_-^{-2}\ll1,\] i.e. if and only if $D_-\gg(kt)^{-1/2},$ which is true by the first half of (\ref{JB2-k-assumption}). Furthermore, from (\ref{Tj-estimate}) and (\ref{RN-estimate}) we see that the estimate for $R_N$ is smaller than the one for \textit{some} $T_j$ (not necessarily $T_{N-1}$) if and only if 
\[t^{-N+j+1}(\ln t)^{(2N+1)/2}k^{-N+j+1}D_-^{-2N+2j+1}\ll1,\] 
which is equivalent to the first half of (\ref{JB2-k-assumption}) with $m=N-j-1$, since we know $k$ behaves like a power of $t$ by the second half of (\ref{JB2-k-assumption}). (Note that if $D_-\gg(kt)^{\epsilon-\frac{m}{2m+1}}$ holds for some $m$, then it also holds for any larger value of $m$, since $kt\gg1$ by (\ref{JB2-k-assumption}), so there is no need to assume $m<N$ in the statement of the theorem.)
\end{proof}

\subsection{Step 3: The asymptotics of $J_{B1}$}

\ble[Large-$t$ asymptotic expansion of $J_{B1}$]
\label{JB1-theorem}
Let
$a:= 1- k \,t^{1-\delta},$
so that
\beq\label{a-defn}
k=t^{\delta-1}(1-a),
\eeq
and assume that this new variable $a$ satisfies
\begin{equation}
\label{JB1-a-assumption}
t^{-\delta/2}\ll a\ll t^{-\delta/3}.
\end{equation}
Then, the large-$t$ asymptotic expansion of $J_{B1}$ is given to first order by
\begin{equation}
\label{JB1-asymptotics}
J_{B1}(t;\la)=
\exp\Big(\ri tF(1-t^{\delta-1};\lambda)-
\ri\omega^2\Big)\,t^{-1/2}\sqrt{\frac{2}{1+\lambda_c}}\left(\int_{\omega}^{\omega + a \sqrt{\la_c t/2}}\re^{\ri\xi^2}\,\mathrm{d}\xi\right)
+\cO\left(t^{-\frac{1}{2}+\frac{3\delta}{2}}a^4\right),
\end{equation}
where $\omega$ is defined by \eqref{eq:omega}.
\ele

\begin{proof}
We start by making two changes of variable in the expression (\ref{JB1-defn}) for $J_{B1}$, in order to improve the notation. Firstly, substituting $x=1-z$ yields the simpler expression
\begin{equation*}
J_{B1}(t;\la)=\int_{k}^{t^{\delta-1}}x^{-1/2}\exp\big(\ri tF(1-x,\lambda)\big)\,\mathrm{d}x.
\end{equation*}
Then, in order to have the critical value at $0$, we substitute $x=t^{\delta-1}(1-\zeta)$ (note that this $\zeta$ variable is not connected to the $\zeta$ variable in \eqref{eq:0} and \S\ref{sec:outline}-\ref{sec:proof}). Thus $\mathrm{d}x/\mathrm{d}\zeta=-t^{\delta-1}$, and the value $x=t^{\delta-1}$ corresponds to $\zeta=0$ as desired, while the value $x=k$ corresponds to $\zeta=1-kt^{1-\delta}<1$. We get:
\beq\label{JB1-x}
J_{B1}(t;\la) =  t^{(\delta-1)/2} \int_0^{1-kt^{1-\delta}}(1-\zeta)^{-1/2} \exp\Big( \ri t F\left(1-t^{\delta-1}(1-\zeta); \la\right)\Big) \rd \zeta.
\eeq

We now expand the exponent in powers of $\zeta$ and then show that the higher powers can be discarded without affecting the leading-order asymptotics of $J_{B1}$. Indeed, expanding the exponent, and using the definition \eqref{eq:la_c} of $\la_c$, we have
\begin{align}\nonumber
\begin{split}
F(1-t^{\delta-1}(1-\zeta),\lambda)&=\left(1-t^{\delta-1}(1-\zeta)\right)\ln\left(1-t^{\delta-1}(1-\zeta)\right)+t^{\delta-1}(1-\zeta)\ln \left(t^{\delta-1}(1-\zeta)\right)
\\
&\hspace{7cm}
+\left(1-t^{\delta-1}(1-\zeta)\right)\ln\lambda ,
\end{split}\\ \nonumber
\begin{split}
&=\left(1-t^{\delta-1}+t^{\delta-1}\zeta\right)\ln\left(1-t^{\delta-1}+t^{\delta-1}\zeta\right)+t^{\delta-1}(1-\zeta)\ln\left(t^{\delta-1}(1-\zeta)\right) \\
&\hspace{7cm}+\left(1-t^{\delta-1}+t^{\delta-1}\zeta\right)\ln\lambda,
\end{split} \\ \nonumber
\begin{split}
&=\left(1-t^{\delta-1}+t^{\delta-1}\zeta\right)\Big[\ln\left(1-t^{\delta-1}\right)+\ln(1+\lambda_c\zeta)\Big]\\
&\hspace{2cm}
+t^{\delta-1}(1-\zeta)\Big[\ln\left(t^{\delta-1}\right)+\ln(1-\zeta)\Big] +\left(1-t^{\delta-1}+t^{\delta-1}\zeta\right)\ln\lambda,
\end{split} \\ \nonumber
\begin{split}
&=\left(1-t^{\delta-1}+t^{\delta-1}\zeta\right)\Big[\ln\left(1-t^{\delta-1}\right)+\lambda_c\zeta-\frac{\lambda_c^2\zeta^2}{2}+\frac{\lambda_c^3\zeta^3}{3}+\dots\Big] \\
&\hspace{2cm}+t^{\delta-1}(1-\zeta)\Big[\ln\left(t^{\delta-1}\right)-\zeta-\frac{\zeta^2}{2}-\frac{\zeta^3}{3}-\dots)\Big] \\
&\hspace{2cm}+\left(1-t^{\delta-1}+t^{\delta-1}\zeta\right)\ln\lambda,
\end{split} \\ \label{eq:exponent}
&=c_0+c_1\zeta+c_2\zeta^2+c_3\zeta^3+\dots,
\end{align}
where the coefficients $c_j$ are evaluated as follows:
\begin{align*}
c_0&=\left(1-t^{\delta-1}\right)\ln\left(1-t^{\delta-1}\right)+t^{\delta-1}\ln t^{\delta-1}+\left(1-t^{\delta-1}\right)\ln\lambda=F\left(1-t^{\delta-1};\lambda\right); \\
c_1&=\left(1-t^{\delta-1}\right)\lambda_c+t^{\delta-1}\ln\left(1-t^{\delta-1}\right)-t^{\delta-1}-t^{\delta-1}\ln t^{\delta-1}+t^{\delta-1}\ln\lambda=t^{\delta-1}\ln\left(\frac{\lambda}{\lambda_c}\right); \\
c_n&=\left(1-t^{\delta-1}\right)\Big[\frac{(-1)^{n+1}\lambda_c^n}{n}\Big]+t^{\delta-1}\Big[\frac{(-1)^{n}\lambda_c^{n-1}}{n-1}\Big]+t^{\delta-1}\left(\frac{-1}{n}\right)-t^{\delta-1}\left(\frac{-1}{n-1}\right), \\
&=(-1)^n\Big[\frac{-t^{\delta-1}\lambda_c^{n-1}}{n}+\frac{t^{\delta-1}\lambda_c^{n-1}}{n-1}\Big]+\frac{t^{\delta-1}}{n(n-1)}, \\
&=\frac{t^{\delta-1}}{n(n-1)}\left(1-(-\lambda_c)^{n-1}\right)\sim\frac{t^{\delta-1}}{n(n-1)} \text{ for all $n\geq2$.}
\end{align*}
Using \eqref{eq:exponent} in \eqref{JB1-x}, we find:
\begin{align}
\nonumber J_{B1}(t;\la)&=t^{(\delta-1)/2}\int_0^{1-kt^{1-\delta}}(1-\zeta)^{-1/2}\,\re^{\ri t(c_0+c_1\zeta+c_2\zeta^2)}\,\re^{\ri t(c_3\zeta^3+c_4\zeta^4+\dots)}\,\mathrm{d}\zeta, \\
\label{JB1-with-IR}
&=\re^{\ri tc_0}t^{(\delta-1)/2}\int_0^{1-kt^{1-\delta}}(1-\zeta)^{-1/2}\,\re^{\ri t(c_1\zeta+c_2\zeta^2)}\,\mathrm{d}\zeta+I_R(t;\la),
\end{align}
where the remainder term $I_R$ is given by
\begin{align*}
I_R(t;\la)&=t^{(\delta-1)/2}\int_0^{1-kt^{1-\delta}}(1-\zeta)^{-1/2}\re^{\ri t(c_0+c_1\zeta+c_2\zeta^2)}\left(\re^{\ri t(c_3\zeta^3+c_4\zeta^4+\dots)}-1\right)\,\mathrm{d}\zeta, \\
&=t^{(\delta-1)/2}\int_0^{a}(1-\zeta)^{-1/2}\cO\left(\re^{\ri t(c_3\zeta^3+c_4\zeta^4+\dots)}-1\right)\,\mathrm{d}\zeta.
\end{align*}
The motivation for the substitution (\ref{a-defn}) now becomes clear: it simplifies the upper bound of the integral from $1-kt^{1-\delta}$ to simply $a$. 
To obtain the required result \eqref{JB1-asymptotics}, we will first prove that, under the assumption (\ref{JB1-a-assumption}), we have $I_R=\cO\big(t^{-\frac{1}{2}+\frac{3\delta}{2}}a^4\big)$.

Firstly, the exponent appearing in the integrand of $I_R$ is:
\begin{align*}
\ri t \sum_{n=3}^{\infty}c_n\zeta^n&=\ri t^{\delta}\sum_{n=3}^{\infty}\frac{\zeta^n}{n(n-1)}\left(1-(-\lambda_c)^{n-1}\right)=\ri t^{\delta}\bigg(\frac{\zeta^3}{6}+\cO\Bigg(\sum_{n=4}^{\infty}\frac{\zeta^n}{12}\left(1+\lambda_c^3\right)\bigg)\bigg), \\
&=\ri t^{\delta}\left(\frac{\zeta^3}{6}+\cO\left(\frac{\zeta^4}{6}(1-\zeta)^{-1}\right)\right)=\frac{\ri t^{\delta}\zeta^3}{6}+\cO\left(t^{\delta}\zeta^4\right).
\end{align*}
So $I_R$ itself can be estimated as follows:
\begin{align*}
I_R(t;\la)&=t^{(\delta-1)/2}\int_0^a(1-\zeta)^{-1/2}\cO\Bigg(\sum_{n=1}^{\infty}\frac{1}{n!}\left(\frac{\ri t^{\delta}\zeta^3}{6}+\cO\left(t^{\delta}\zeta^4\right)\right)^n\bigg)\,\mathrm{d}\zeta, \\
&=\cO\Bigg(t^{(\delta-1)/2}\int_0^{a}\left(\frac{\ri t^{\delta}\zeta^3}{6}+\cO\left(t^{\delta}\zeta^4,t^{2\delta}\zeta^6\right)\right)\,\mathrm{d}\zeta\bigg) \tas t\tendi,
\end{align*}
where we have used the second half of (\ref{JB1-a-assumption}), or equivalently $t^{\delta}a^3\ll1$, to ensure that the powers of $t^{\delta}\zeta^3$ in the exponential expansion do not increase to infinity. 
The second half of (\ref{JB1-a-assumption}) also implies that $a\ll1$, and so all higher-order terms, whether of the form $t^{\delta}\zeta^{3+K}$ or $\left(t^{\delta}\zeta^3\right)^K$ or a combination of both, are negligible compared to the leading term $t^{\delta}\zeta^3$. Thus $I_R$ satisfies
\begin{equation*}
I_R(t;\la)=\cO\Bigg(t^{(\delta-1)/2}\int_0^a\left(\frac{\ri t^{\delta}\zeta^3}{6}\right)\,\mathrm{d}\zeta\bigg)=\cO\left(t^{-\frac{1}{2}+\frac{3\delta}{2}}a^4\right).
\end{equation*}
Hence, equation (\ref{JB1-with-IR}) becomes:
\begin{align*}
J_{B1}(t;\la)&=\re^{\ri tc_0}t^{(\delta-1)/2}\int_0^{a}(1-\zeta)^{-1/2}\re^{\ri t(c_1\zeta+c_2\zeta^2)}\,\mathrm{d}\zeta+\cO\left(t^{-\frac{1}{2}+\frac{3\delta}{2}}a^4\right), \\
&=\re^{\ri tc_0}t^{(\delta-1)/2}\int_0^{a}\left(1+O(\zeta)\right)e^{\ri t(c_1\zeta+c_2\zeta^2)}\,\mathrm{d}\zeta+\cO\left(t^{-\frac{1}{2}+\frac{3\delta}{2}}a^4\right), \\
&=\re^{\ri tc_0}t^{(\delta-1)/2}\int_0^{a}\re^{\ri t(c_1\zeta+c_2\zeta^2)}\,\mathrm{d}\zeta+\cO\Bigg(t^{(\delta-1)/2}\int_0^{a}\zeta\mathrm{d}\zeta\bigg)+\cO\left(t^{-\frac{1}{2}+\frac{3\delta}{2}}a^4\right), \\
&=\re^{\ri tc_0}t^{(\delta-1)/2}\int_0^{a}\re^{\ri t(c_1\zeta+c_2\zeta^2)}\,\mathrm{d}\zeta+\cO\left(t^{-\frac{1}{2}+\frac{\delta}{2}}a^2\right)+\cO\left(t^{-\frac{1}{2}+\frac{3\delta}{2}}a^4\right).
\end{align*}
Since the first half of (\ref{JB1-a-assumption}) gives $t^{-\frac{1}{2}+\frac{\delta}{2}}a^2\ll t^{-\frac{1}{2}+\frac{3\delta}{2}}a^4$, we obtain
\begin{equation}
\label{JB1-asymptotics-1}
J_{B1}(t;\la)=
\re^{\ri tF(1-t^{\delta-1};\lambda)}t^{(\delta-1)/2}\int_0^a \exp\bigg[\ri t^{\delta}\left((\ln\tfrac{\lambda}{\lambda_c})\zeta+\tfrac{1}{2}(1+\lambda_c)\zeta^2\right)\bigg]\,\mathrm{d}\zeta
+\cO\left(t^{-\frac{1}{2}+\frac{3\delta}{2}}a^4\right).
\end{equation}

We now manipulate the integral on the right-hand side of \eqref{JB1-asymptotics-1} to obtain the desired result \eqref{JB1-asymptotics}.
Using the fact that
\beq\label{eq:trick1}
t\lambda_c=t^{\delta}(1+\lambda_c)
\eeq
(which follows from the definition (\ref{eq:la_c}) of $\lambda_c$), we find that the exponent of the integrand is
\begin{align*}
\ri t^{\delta}\bigg(\ln\left(\frac{\lambda}{\lambda_c}\right)\zeta+\frac{1}{2}(1+\lambda_c)\zeta^2\bigg)
&=\frac{\ri t^{\delta}(1+\lambda_c)}{2}\left(\zeta^2+2\frac{\ln(1+\Lambda)}{1+\la_c}\zeta\right),\\
&=\frac{\ri t\lambda_c}{2}\left(\zeta^2+2\sqrt{\frac{2}{\la_c t}}\omega\zeta\right),\\
&=\frac{\ri t\lambda_c}{2}\left(\zeta+\sqrt{\frac{2}{\la_c t}}\omega\right)^2- \ri\omega^2.
\end{align*}
So using the change of variables $\xi=\omega+\zeta \sqrt{t\la_c/2}$, the integral term in (\ref{JB1-asymptotics-1}) can be written as
\begin{align*}
&\exp\Big(\ri tF(1-t^{\delta-1};\lambda)-\ri\omega^2\Big)t^{(\delta-1)/2}\int_0^a \exp\left(\frac{\ri t\lambda_c}{2} \left(\zeta+\sqrt{\frac{2}{\la_c t}}\omega\right)^2\right)\,\mathrm{d}\zeta \\
&=\,\exp\Big(\ri tF(1-t^{\delta-1};\lambda)-\ri\omega^2\Big)t^{(\delta-1)/2}\sqrt{\frac{2}{\la_c t}}\int_{\omega}^{\omega+ a\sqrt{\la_c t/2}}\re^{\ri\xi^2}\,\mathrm{d}\xi,
\end{align*}
which becomes the main term in (\ref{JB1-asymptotics}) after using \eqref{eq:trick1}.
\end{proof}

\subsection{Step 4: Combining and unifying the asymptotics}

We now prove Theorem \ref{JB1+2-theorem} by combining the results of Lemmas \ref{JB2-theorem} and \ref{JB1-theorem}. This is the point where we need to be very precise about our choice of the splitting point $k$, or equivalently of the variable $a$ defined by \eqref{a-defn}, in order for these two results to be compatible. Note that as the order $m$ of the asymptotics increases, $a$ decreases by \eqref{a-assumption-sandwich}, and the error term from $J_{B1}$ becomes smaller and smaller in comparison to the series from $J_{B2}$. This makes sense, because when $a$ is very small, $k$ is very close to $t^{\delta-1}$, i.e. the integral in $J_{B2}$ is closer to the stationary point while the one in $J_{B1}$ is shorter, and so $J_{B1}$ contributes less to the final answer.

\

\bpf[Proof of Theorem \ref{JB1+2-theorem}]
When deriving the asymptotics for $J_{B2}$ in Lemma \ref{JB2-theorem}, we were still using a fairly general parameter $k$, required only to satisfy the condition (\ref{JB2-k-assumption}). But in Lemma \ref{JB1-theorem} we used a much more specific form of $k$, namely that given by (\ref{a-defn}) with $a$ satisfying (\ref{JB1-a-assumption}). 
In order to combine the asymptotics of $J_{B1}$ and $J_{B2}$, we first rewrite the results of Lemma \ref{JB2-theorem} with $k$ replaced by $t^{\delta-1}(1-a)$.

Firstly, we have \[D_-=\ln\left(\frac{t^{\delta-1}}{t^{\delta-1}(1-a)}\right)=-\ln(1-a)\sim a,\] and so the assumption (\ref{JB2-k-assumption}) can be rewritten as
\begin{equation}
\label{JB2-a-assumption}
t^{\epsilon\delta-\frac{m\delta}{2m+1}}\ll a\ll1.
\end{equation}
Note that taking $m=1$ would make (\ref{JB1-a-assumption}) and (\ref{JB2-a-assumption}) contradict each other, and so we must have
\begin{equation*}
t^{\epsilon\delta-\frac{m\delta}{2m+1}}\ll a\ll t^{-\delta/3}
\end{equation*}
for some $m\geq2$ and $\epsilon>0$. By Lemma \ref{JB2-theorem}, we then find that the estimate (\ref{JB2-asymptotics}) holds with
\begin{equation}
\label{Tj-estimate-a}
T_j(t;\la)=\cO\left((2j-1)!! t^{-\frac{1}{2}-\frac{(2j+1)\delta}{2}}a^{-2j-1}\right)
\end{equation}
for each $j$, and
\beqs
R_N(t;\la)=\cO\left((2N-1)!! t^{-\frac{1}{2}-\frac{(2N-1)\delta}{2}}(\ln t)^{(2N+1)/2}a^{-2N}\right).
\eeqs
In fact, we can improve \eqref{JB2-asymptotics} further. We assume that $m$ is minimal for (\ref{JB2-a-assumption}) to be valid, i.e. that
\begin{equation*}
t^{\epsilon\delta-\frac{m\delta}{2m+1}}\ll a\ll t^{\epsilon\delta-\frac{(m-1)\delta}{2m-1}},
\end{equation*}
which is implied by \eqref{a-assumption-sandwich}. As was discussed towards the end of the proof of Lemma \ref{JB2-theorem}, this means our estimate for $R_N$ is smaller than the one for $T_{N-m-1}$ but larger than the one for $T_{N-m}$. Thus the estimate \eqref{JB2-asymptotics} becomes:
\begin{align}\nonumber
\begin{split}
J_{B2}(t;\la)&=\sum_{j=1}^{N-m-1}T_j(t;\la)+\sum_{j=N-m}^{N-1}\cO\left((2j-1)!! t^{-\frac{1}{2}-\frac{(2j+1)\delta}{2}}a^{-2j-1}\right) \\
&\hspace{5cm}+\cO\left((2N-1)!! t^{-\frac{1}{2}-\frac{(2N-1)\delta}{2}}(\ln t)^{(2N+1)/2} a^{-2N}\right),
\end{split} \\ \label{eq:error}
&=\sum_{j=1}^{N-m-1}T_j(t;\la)+\cO\left((2N-1)!! t^{-\frac{1}{2}-\frac{(2N-1)\delta}{2}}(\ln t)^{(2N+1)/2}a^{-2N}\right),
\end{align}
where the error term is sufficiently small compared to the rest that it doesn't swallow up any of the remaining series.

Since we need to combine this result with the asymptotic formula (\ref{JB1-asymptotics}) for $J_{B1}$, we would like to ensure that the error term of $J_{B1}$, namely $\cO\big(t^{-\frac{1}{2}+\frac{3\delta}{2}}a^4\big)$, is \textit{also} sufficiently small so that it doesn't swallow up any of the terms in the above series. In other words, we require that our estimate for $T_j$ should be $\gg t^{-\frac{1}{2}+\frac{3\delta}{2}}a^4$ for all $j\leq N-m-1$. Checking this condition, we obtain
\begin{align*}
t^{-\frac{1}{2}-\frac{(2j+1)\delta}{2}}a^{-2j-1}\gg t^{-\frac{1}{2}+\frac{3\delta}{2}}a^4,
\quad\text{ which holds iff }\quad
a\ll t^{-(j+2)\delta/(2j+5)},
\end{align*}
which, by the assumption (\ref{a-assumption-sandwich}), is true provided that $j+2\leq m-1$. So we set $N-m-1=m-3$, i.e. $N=2m-2$. Now combining the two asymptotic expansions (\ref{JB1-asymptotics}) and (\ref{eq:error}) gives the required result (\ref{JB1+2-asymptotics}).
\end{proof}

\begin{remark}[Comparison of error terms] \label{rmk:error-comparison}
Comparing the error terms in \eqref{eq:error} and \eqref{JB1-asymptotics}, we find that, with our choice of $N=2m-2$,
\begin{align}\label{eq:error2}
t^{-\frac{1}{2}-\frac{(2N-1)\delta}{2}}(\ln t)^{(2N+1)/2}a^{-2N}\ll t^{-\frac{1}{2}+\frac{3\delta}{2}}a^4
\quad\text{ iff }\quad
a\gg t^{-\frac{\delta}{2}+\frac{\delta}{4m}}(\ln t)^{\frac{1}{2}-\frac{3}{8m}}.
\end{align}
If we assume $a$ takes the form $a=t^{-b\delta}$
for some constant $b$, then we can ignore the log terms. This is because the condition (\ref{a-assumption-sandwich}) can be rewritten as
\begin{equation*}
\frac{1}{2}-\frac{1}{4m-2}<b<\frac{1}{2}-\frac{1}{4m+2},
\end{equation*}
and the cutoff point \eqref{eq:error2} for which error term is dominant is 
\begin{equation}
\label{b-cutoff}
b=\frac{1}{2}-\frac{1}{4m},
\end{equation}
which is, in some sense, directly in the middle of the interval of possible values for $b$.
\end{remark}

\bco
The leading-order asymptotics of $J_B$ are given by
\begin{equation}\label{JB1+2-asymptotics-1}
\begin{split}
&J_B(t;\la)=\ri \,\exp\big(\ri tF(1-k;\lambda)\big)\,t^{-\frac{1}{2}-\frac{\delta}{2}}\left[\ln\left(\frac{1}{k}-1\right)+\ln\lambda\right]^{-1} \\
&\hspace{1.5cm}+\exp\Big(\ri tF(1-t^{\delta-1};\lambda)-
\ri\omega^2\Big)\,t^{-1/2}\sqrt{\frac{2}{1+\lambda_c}}\left(\int_{\omega}^{\omega + a \sqrt{\la_c t/2}}\re^{\ri\xi^2}\,\mathrm{d}\xi\right)+\cO\left(t^{\epsilon-\frac{1}{2}-\frac{\delta}{4}}\right),
\end{split}
\end{equation}
for any $\epsilon>0$, where $k=t^{\delta-1}(1-a)$, $a=t^{-7\delta/16}$, 
and $\omega$ is defined by \eqref{eq:omega}.

Moreover, \eqref{JB1+2-asymptotics-1} agrees with the asymptotics of $J_B$ found in Theorem \ref{thm:1} in the case $\sigma=1/2$.
\eco

\begin{proof}
We take $m=4$, the lowest possible value of $m$, and let $a=t^{-b\delta}$ as in Remark \ref{rmk:error-comparison}. By (\ref{b-cutoff}), the value of $a$ required to make both error terms in (\ref{JB1+2-asymptotics}) of comparable size is given by \[a=t^{-\frac{\delta}{2}+\frac{\delta}{4m}}=t^{-7\delta/16}.\] With this choice of $a$, the two error terms in (\ref{JB1+2-asymptotics}) are 
\beqs
\cO\left(t^{-\frac{1}{2}-\frac{\delta}{4}}(\ln t)^{(13/2)}\right)\quad\text{ and }\quad\cO\left(t^{-\frac{1}{2}-\frac{\delta}{4}}\right),
\eeqs
 which are both $\cO\big(t^{\epsilon-\frac{1}{2}-\frac{\delta}{4}}\big)$ as required.

Also, from the definition \eqref{eq:Tj} of $T_j$, we have
\beqs
T_1(t;\la)=\Bigg[\bigg(\frac{(1-z)^{-1/2}}{\ri t\frac{\partial F}{\partial z}}\bigg)\re^{\ri tF(z;\lambda)}\Bigg]_{z=1-k}^{\infty \re^{\ri \phi}}=\frac{-k^{-1/2}\re^{\ri tF(1-k;\lambda)}}{\ri t\frac{\partial F}{\partial z}\Big|_{z=1-k}} =\ri \re^{\ri tF(1-k;\lambda)}t^{-1}k^{-1/2}D^{-1}.
\eeqs
Then, using the definition \eqref{a-defn} of $a$ and the definition \eqref{d-defn} of $D$, we find
\begin{align*}
T_1(t;\la)&=\ri \re^{\ri tF(1-k;\lambda)}t^{-\frac{1}{2}-\frac{\delta}{2}}(1-a)^{-1/2}D^{-1} =\ri \re^{\ri tF(1-k;\lambda)}t^{-\frac{1}{2}-\frac{\delta}{2}}D^{-1}+\cO\left(t^{-\frac{1}{2}-\frac{\delta}{2}}a D_-^{-1}\right), \\
&=\ri \re^{\ri tF(1-k;\lambda)}t^{-\frac{1}{2}-\frac{\delta}{2}}\Big[\ln\left(\frac{1}{k}-1\right)+\ln\lambda\Big]^{-1}+\cO\left(t^{-\frac{1}{2}-\frac{\delta}{2}}\right),
\end{align*}
and this $\cO\big(t^{-\frac{1}{2}-\frac{\delta}{2}}\big)$ error term is absorbed by the $\cO\big(t^{-\frac{1}{2}-\frac{\delta}{4}}\big)$ error term in \eqref{JB1+2-asymptotics}.

It remains to show that (\ref{JB1+2-asymptotics-1}) agrees with the asymptotics in Theorem \ref{thm:1} in the case $\sigma=1/2$.
From \eqref{eq:JBtJB} and \eqref{eq:result1} we have
\begin{align}\label{eq:euan1}
J_B(t;\la)
=\sqrt{\frac{2}{(1+\la_c)t}}\exp\left(\frac{\ri tf_0(\La,\la_c)}{1+\lambda_c}-\ri \omega^2\right)\left(\int_{\omega}^{\infty \re^{\ri \pi/4}}\re^{\ri\xi^2}\,\mathrm{d}\xi\right) \,\bigg(1 + o(1)\bigg).
\end{align}

Using the definitions of $f_0$ \eqref{eq:f_0} and $\la_c$ \eqref{eq:la_c}, and some algebraic manipulation we find that
\begin{align*}
\frac{\ri tf_0(\La,\la_c)}{1+\lambda_c}-\ri\omega^2
=\ri tF(1-t^{\delta-1};\lambda),
\end{align*}
so \eqref{eq:euan1} becomes
\begin{equation}\label{eq:euan2}
J_B(t;\la)=\exp\Big(\ri tF(1-t^{\delta-1};\lambda)-\ri\omega^2\Big)t^{-1/2}\sqrt{\frac{2}{1+\la_c}}\left(\int_{\omega}^{\infty\re^{\ri\pi/4}}\re^{\ri\xi^2}\,\mathrm{d}\xi\right)\Big(1+o(1)\Big).
\end{equation}
From (\ref{JB1+2-asymptotics-1}) we have
\beq\label{eq:match1}
J_B(t;\la)=\exp\Big(\ri tF(1-t^{\delta-1};\lambda)-\ri\omega^2\Big)t^{-1/2}\sqrt{\frac{2}{1+\la_c}}\left(\int_{\omega}^{\infty\re^{\ri\pi/4}}\re^{\ri\xi^2}\,\mathrm{d}\xi\right) + r(t;\la),
\eeq
where the remainder $r(t;\la)$ is defined by
\begin{align}\nonumber
r(t;\la)&:= \ri \,\exp\big(\ri tF(1-k;\lambda)\big)\,t^{-\frac{1}{2}-\frac{\delta}{2}}\left[\ln\left(\frac{1}{k}-1\right)+\ln\lambda\right]^{-1} \\ 
& \qquad-\exp\Big(\ri tF(1-t^{\delta-1};\lambda)-\ri\omega^2\Big)t^{-1/2}\sqrt{\frac{2}{1+\la_c}}\left(\int_{\omega+ a\sqrt{\la_c t/2}}^{\infty\re^{\ri\pi/4}}\re^{\ri\xi^2}\,\mathrm{d}\xi\right).\label{eq:Remain}
\end{align}
If we can show that $r(t;\la)$ is little-o of the first term in \eqref{eq:match1} as $t\tendi$ (independently of $\la$), then the asymptotics \eqref{eq:match1} obtained from 
 Theorem \ref{JB1+2-theorem} are the same as \eqref{eq:euan2}, i.e., those obtained from Theorem \ref{thm:1}, and the proof is complete.
The asymptotics of the first term in \eqref{eq:match1} (which we can obtain from \eqref{eq:result2} in Theorem \ref{thm:1}) imply that it is sufficient to show that 
\beq\label{eq:Rasym}
r(t;\la) = 
\begin{cases} 
o(t^{-1/2}) &\text{ when }\omega=\cO(1), \\
o\big(t^{-1/2}t^{-\delta/2} (\log t)^{-1}\big) &\text{ when }\omega\tendi.
\end{cases}
\eeq
Now, from the definitions of $k$ \eqref{a-defn}, $\La$ \eqref{Lambda-defn}, and $\la_c$ \eqref{eq:la_c}, and the Taylor series \eqref{eq:log}, 
\begin{align*}
\ln\left(\frac{1}{k}-1\right) + \ln \la 
&= \ln \left( \left( \frac{t^{1-\delta}}{1-a}-1\right)\la_c (1+\La)\right),\\
& = \ln (1+\La) + \ln\left(\frac{1}{1-a}\right) + \ln \left(1 + \frac{a}{t^{1-\delta}-1}\right),\\
&= \ln (1+\La) + a +  \cO(a^2) +\frac{a}{t^{1-\delta}}\left(1+ \cO\left(\frac{1}{t^{1-\delta}}\right)\right) \quad\text{ as } t \tendi.
\end{align*}
Using these asymptotics in the first term of \eqref{eq:Remain}, and using the integration by parts \eqref{eq:9alt} in the second term, we find that 
\begin{align}\nonumber
r(t;\la)=
& \frac{\ri \,\exp\big(\ri tF(1-k;\lambda)\big)}{t^{1/2}t^{\delta/2} \Big[ \ln(1+\La) + a + \cO(a^2)\Big]}\\ \nonumber
&\hspace{1cm}
-\frac{\ri \exp\Big(\ri tF(1-t^{\delta-1};\lambda)-\ri\omega^2\Big) \re^{\ri (\omega + a\sqrt{\la_c t/2})^2}
}{
t^{1/2}(1+\la_c)^{1/2}\sqrt{2}\left(\omega+ a\sqrt{\frac{\la_c t}{2}}\right)
}
\left(1+ \cO\left(\left(\omega+ a\sqrt{\frac{\la_c t}{2}}\right)^{-2}\right)\right),\\ \nonumber
&=\frac{\ri \exp\big(\ri tF(1-t^{\delta-1};\lambda)\big)}{t^{1/2}t^{\delta/2}}\left(
\frac{
\exp\big( \ri t \big( F(1-k;\la) - F(1-t^{\delta-1};\la)\big)\big)
}{
\ln(1+\La) + a + \cO(a^2)
}
\right.\\ 
&\hspace{6cm}\left.
- \frac{
\exp\big( 2\ri \omega a \sqrt{\la_c t/2} + \ri a^2 \la_c t/2\big)
}{
\ln(1+\La) + a(1+ \la_c)
}
\big(1 + \cO(t^{-\delta/8})\big)
\right),\label{eq:Rbig}
\end{align}
where we have used both the definition of $\omega$ \eqref{eq:omega} and the equation \eqref{eq:trick1}. 

We now use \eqref{eq:omega} and \eqref{eq:trick1} to manipulate the final exponent in \eqref{eq:Rbig}:
\beqs
2 \omega a \sqrt{\frac{\la_c t}{2}} + \half a^2 \la_c t = t^{\delta} a \ln \left(\frac{\la}{\la_c}\right) + \half a^2 t^\delta (1+\la_c).
\eeqs
Therefore, the required asymptotics of $r(t;\la)$ \eqref{eq:Rasym} will follow from \eqref{eq:Rbig} if we can show that
\beq\label{eq:match2}
t \Big( F(1-k;\la) - F(1-t^{\delta-1};\la)\Big)= t^{\delta} a \ln \left(\frac{\la}{\la_c}\right) + \half a^2 t^\delta (1+\la_c) +\cO(t^{-\eps})
\eeq
for some $\eps>0$. The definition of $F(z;\la)$ \eqref{eq:2old} implies that
\beqs
F(z;\la)- F(w;\la) = (z-w)\bigg[ \ln\left(\frac{w}{1-w}\right) + \ln \la\bigg] + (1-z) \ln \left(\frac{1-z}{1-w}\right) + z\ln \left( \frac{z}{w}\right).
\eeqs
Using this, along with some algebraic manipulation, the equation \eqref{eq:trick1}, and the Taylor series \eqref{eq:log}, we find that 
the left-hand side of \eqref{eq:match2} equals 
\begin{align*}
&t^{\delta}a \ln\left(\frac{\la}{\la_c}\right) + t^{\delta}(1-a) \ln(1-a) + \big(t- t^{\delta}(1-a)\big) \ln(1+a\la_c),\\
&\qquad 
=t^{\delta}a \ln\left(\frac{\la}{\la_c}\right) + \half a^2 t^\delta(1+\la_c) + \cO(a^3 t^\delta),
\end{align*}
which is the right-hand side of \eqref{eq:match2}, since $a^3 t^\delta= t^{-5\delta/16}$; the proof is therefore complete.
\end{proof}

%\bibliographystyle{plain}
%\bibliography{biblio_combined}

\end{document}